\documentclass[12pt]{amsart}

\usepackage{mathrsfs}
\usepackage{amsfonts}
\usepackage{amssymb}
\usepackage{amsfonts, amscd, amsmath, mathrsfs, amssymb, amsthm, amsxtra, bbding, epsfig, graphicx, latexsym, url, cite}
\usepackage[papersize={8.3in,11.8in},textwidth=15.6cm,textheight=24.1cm,centering]{geometry}
\usepackage{enumerate}
\usepackage{caption} 

 \usepackage{datetime}

\usepackage{graphicx}
\usepackage{tikz}
\usepackage{float} 

\usepackage{todonotes}

\usepackage{xcolor}
\definecolor{cite}{rgb}{0.00,0.60,1.00}
\definecolor{url}{rgb}{1.00,0.10,0.80}
\definecolor{link}{rgb}{0.00,0.00,1.00}
\usepackage[colorlinks,linkcolor=link,urlcolor=url,citecolor=cite,pagebackref,breaklinks]{hyperref}

\hypersetup{
pdfstartpage=1,
pdfstartview=FitH}

\def\\{\cr}
\def\({\left(}
\def\){\right)}
\def\fl#1{\left\lfloor#1\right\rfloor}

\DeclareFontFamily{U}{mathx}{\hyphenchar\font45}
\DeclareFontShape{U}{mathx}{m}{n}{
      <5> <6> <7> <8> <9> <10>
      <10.95> <12> <14.4> <17.28> <20.74> <24.88>
      mathx10
      }{}
\DeclareSymbolFont{mathx}{U}{mathx}{m}{n}
\DeclareMathAccent{\widecheck}{\mathalpha}{mathx}{"71}

\numberwithin{equation}{section}

\allowdisplaybreaks

\newtheorem{theorem}{Theorem}[section]
\newtheorem{lemma}[theorem]{Lemma}

\newtheorem{corollary}[theorem]{Corollary} 

\usepackage[norefs,nocites]{refcheck}

\makeatletter
\newcounter{roem}
\renewcommand{\theroem}{\Roman{roem}}

\newcommand{\c@org@eq}{}
\let\c@org@eq\c@equation
\newcommand{\org@theeq}{}
\let\org@theeq\theequation

\newcommand{\setroem}{
\let\c@equation\c@roem
 \let\theequation\theroem}

\newcommand{\setarab}{
\let\c@equation\c@org@eq
\let\theequation\org@theeq}
\makeatother

\newtheorem*{claim*}{Claim}

\newcommand{\Disc}{\mathrm{Disc}}
\newcommand{\ord}{\mathrm{ord}}

\usepackage{scalerel}

\renewcommand{\geq}{\geqslant}
\renewcommand{\leq}{\leqslant}
\renewcommand{\ge}{\geqslant}
\renewcommand{\le}{\leqslant}

\DeclareMathOperator{\Mod}{mod}

\renewcommand{\bmod}[1]{\,(\Mod{ #1})}

\newcommand{\A}{\mathbb{A}}
\newcommand{\C}{\mathbb{C}}
\newcommand{\F}{\mathbb{F}}
\newcommand{\N}{\mathbb{N}}

\newcommand{\Z}{\mathbb{Z}}

\newcommand{\bm}{\mathbf{m}}

\newcommand{\bu}{\mathbf{u}}
\newcommand{\bv}{\mathbf{v}}
\newcommand{\bw}{\mathbf{w}}
\newcommand{\bx}{\mathbf{x}}

\def\vec#1{\mathbf{#1}}

\newcommand{\sA}{\mathscr{A}}
\newcommand{\sF}{\mathscr{F}}
\newcommand{\sH}{\mathscr{H}}
\newcommand{\sM}{\mathscr{M}}

\newcommand{\cA}{\mathcal{A}}
\newcommand{\cB}{\mathcal{B}}

\newcommand{\cD}{\mathcal{D}}
\newcommand{\cE}{\mathcal{E}}
\newcommand{\cF}{\mathcal{F}}
\newcommand{\cG}{\mathcal{G}}

\newcommand{\cI}{\mathcal{I}}

\newcommand{\cS}{\mathcal{S}}
\newcommand{\cT}{\mathcal{T}}
\newcommand{\cU}{\mathcal{U}}

\newcommand{\cX}{\mathcal{X}}
\newcommand{\cY}{\mathcal{Y}}

\newcommand{\fM}{\mathfrak{M}}

\newcommand{\fS}{\mathfrak{S}}

\newcommand \fU{\mathfrak U}
\newcommand\fs{\mathfrak s}
\newcommand \vm{\vec{m}}
\newcommand \vu{\vec{u}}
\newcommand \vv{\vec{v}}
\newcommand \vx{\vec{x}}

\def\mand{\qquad\mbox{and}\qquad}

\def\le{\leqslant}
\def\leq{\leqslant}
\def\ge{\geqslant}
\def\geq{\geqslant}

\usepackage{graphicx}
\usepackage{tikz}

\begin{document}


\title{Short sums of the M{\"o}bius function over function fields}

\author[\'E. Fouvry] {\'Etienne Fouvry}
\address{\'EF: D{\'e}partement de Math{\'e}matiques, Universit{\'e} Paris-Saclay,
91405 Orsay Ce\-dex, 
France}
\email{etienne.fouvry@universite-paris-saclay.fr}

\author[I. E. Shparlinski] {Igor E. Shparlinski}
\address{IES: School of Mathematics and Statistics, University of New South Wales, Sydney NSW 2052, Australia}
\email{igor.shparlinski@unsw.edu.au}
 
\author[P. Xi]{Ping Xi}
\address{PX: School of Mathematics and Statistics, Xi'an Jiaotong University, Xi'an 710049, P. R. China}
\email{ping.xi@xjtu.edu.cn}

\begin{abstract} 
We develop a new method which allows us to estimate, with a power saving, 
sums of the M{\"o}bius function over function fields $\F_p(X)$. The sums run over monic polynomials with all other coefficients in short intervals, of length below the P\'olya--Vinogradov threshold $p^{1/2}$. A classical result of L.~Stickelberger (1898) reduces this question to bounding character sums with polynomial discriminants which are inhomogeneous multivariate polynomials. Variants of Burgess's method with non-linear shifts {\it {\`a} la carte} are introduced to go beyond the P\'olya--Vinogradov range, combined with a stratification result of J. Xu (2020) for complete character sums.
\end{abstract}

\keywords{Geometric M{\"o}bius function, character sum, discriminants of polynomials}
\subjclass[2020]{11L40, 11T06}

\date{\today \quad \currenttime}

\maketitle

\tableofcontents


\section{Introduction and statement of the results} 

\subsection{Motivations and backgrounds}
Let $p$ be a prime and $\F_p$ the field of $p$ elements. We may identify $\F_p$ with $\{0,1,\cdots,p-1\}$.
For a monic polynomial $f(X)\in \F_p[X]$, which factors into $s$ irreducible factors over $\F_p$, we define
\[
\mu_p (f)=
\begin{cases}
0, & \text{ if } f \text{ has a multiple root in }\overline{\F}_p , \\
(-1)^s,&\text{ otherwise}, 
\end{cases}
\]
where $\overline{\F}_p$ is the algebraic closure of $\F_p$. 
This is the so-called {\it geometric M{\"o}bius function\/} over the function field $\F_p(X)$.
Motivated by sums of the classical  M{\"o}bius function over $\Z$, it is natural to study the average
\begin{equation}\label{eq:Mobiussum}
\fM (\cF) =\sum_{f\in\cF}\mu_p(f),
\end{equation}
where $\cF$ is a chosen finite family of polynomials. 
A typical goal of studying $\fM (\cF)$ is to obtain non-trivial bounds with the set $\cF$ being as sparse as possible. Motivated by Davenport's theorem and Chowla's conjecture over $\Z$, one can also study a generalised version of~\eqref{eq:Mobiussum} twisted by additive or multiplicative characters, or shifted multiplicative functions. See, for instance,~\cite{Ba26,BCJ15,BL19,CR14,DOS24,HM19,KR16,Por18,Ro18,Sa21,Sa24,SS22a,SS22b} and references therein.
Roughly speaking, many of these works examine the randomness of $\mu_p(f)$ when $f$ is controlled by its degree. More precisely, one takes the above family $\cF$ to be 
\begin{equation}\label{eq:F0}
\cF_0=\{f \text{ monic over }\F_p:~\deg f=n,~\deg(f-F)\leqslant d\},
\end{equation}
where $F$ is a fixed monic polynomial over $\F_p$ of degree $n$ and $d\leqslant n$. It is worthwhile to mention that Sawin~\cite{Sa21} has estimated $\fM(\cF_0)$ non-trivially for all $1\leqslant  d<n$, as $p\rightarrow+\infty$.

In this paper, we work with $\fM(\cF)$ in a different and substantially less studied scenario 
where the polynomials in $\cF$ are measured by the size of their coefficients. 
While our method works in greater generality, we specialise $\cF$ to be one of the  options of the form
 \begin{equation} 
\label{eq:F1}
\cG=\{X\mapsto X^d+a_1X^{d-1}+\cdots+a_{d-1}X+a_d:~(a_1,\cdots,a_d)\in\cB_d\}, 
\end{equation}
and
 \begin{equation} 
\label{eq:F2}
\cT=\{X\mapsto X^d+aX^k+b:~(a,b)\in \cB_2\},
\end{equation} 
where $\cB_\nu$ is a certain box in $\F_p^\nu$, $\nu =d,2$, with side lengths $H_1, \ldots, H_\nu$. 
We note that both sums $\fM (\cG)$ and $\fM(\cT)$ have already been considered in~\cite{DOS24}, however, for the applications in~\cite{DOS24}, 
the strength of the bound, rather than the range where it is non-trivial, is more important. Here we concentrate 
on the case when all coefficients of the polynomial fall into small boxes, with all side lengths falling below $p^{1/2}$. That is, we aim to obtain power-saving upper bounds for 
$\fM (\cG)$ and $\fM(\cT)$ beyond the so-called 
{\it  P\'olya-Vinogradov\/} range (see~\eqref{eq:PVrange} below).

\subsection{Bounds for the sums of the M{\"o}bius function over function fields}
First we consider sums over monic polynomials with all other coefficients running in ``short intervals''.

\begin{theorem}\label{thm:Mobius-general}
For each $d\geqslant2$, and for all $N_1, \ldots, N_d\in \Z$  and $\gamma>0$, put 
\begin{align*}
\cG=\{X\mapsto X^d+a_1X^{d-1}&+\cdots+a_{d-1}X+a_d:\\
& ~N_j<a_j\leqslant N_j+ p^\gamma\text{ for all }j=1,2,\cdots,d\}.
\end{align*}
For any $\gamma$ with
\[
\frac{1}{2}-\frac{1}{2(d^2+1)}<\gamma<1,
\]
there exists some $\delta>0$, depending only on $d$ and $\gamma$, such that
\[
\fM (\cG) \ll  p^{d\gamma -\delta},
\]
where the implied constant depends only on  $d$ and $\gamma$. 
\end{theorem}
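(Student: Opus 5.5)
Via Stickelberger's theorem, I would convert $\fM(\cG)$ into a short multiplicative character sum in $d$ variables with the discriminant as argument, and then bound that sum with a multidimensional Burgess argument. For odd $p$ and squarefree monic $f$ of degree $d$, we have $\mu_p(f)=(-1)^d\left(\frac{\Disc(f)}{p}\right)$. Polynomials with multiple roots have $\Disc(f)=0$, so the Legendre symbol vanishes on them as well. Writing $\chi$ for the Legendre symbol and $D(a_1,\dots,a_d)$ for the discriminant of $X^d+a_1X^{d-1}+\cdots+a_d$, this gives
\[
\fM(\cG)=(-1)^d\sum_{N_j<a_j\leqslant N_j+H}\chi\bigl(D(a_1,\dots,a_d)\bigr),\qquad H=p^\gamma .
\]
The claim is therefore a power-saving bound for an incomplete $d$-dimensional character sum over a cube of side $H<p^{1/2}$. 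The polynomial $D$ is inhomogeneous of degree $2d-2$ and is not a perfect square times a constant, since it vanishes to first order along the discriminant hypersurface.

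I would shift with \emph{nonlinear} Burgess-type shifts chosen to fit the structure of $D$. The discriminant is invariant under $X\mapsto X+t$, and under this substitution the coefficients transform polynomially in $t$. It also scales under $X\mapsto sX$, where $a_j\mapsto s^ja_j$ and $D\mapsto s^{d(d-1)}D$. So I would take shifts of the form $a_j\mapsto a_j+u\,v^{j}$, or more generally shifts of the form $\mathbf a+u\,\mathbf w(v)$ with $\mathbf w(v)=(v,v^2,\dots,v^d)$, using small $u\le U$ and $v\le V$ with $UV^d\le \varepsilon H$. These shifts move the box by at most $O(UV^d)$ in every coordinate, so
\[
\fM(\cG)\ll \frac{1}{UV}\sum_{\mathbf a}\Bigl|\sum_{u\leqslant U}\chi\bigl(D(\mathbf a+u\,\mathbf w(v))\bigr)\Bigr| +(\text{boundary error}\ \ll H^{d-1}UV^d).
\]
Grouping the pairs $(\mathbf a,v)$ according to the class of $\mathbf a\,\mathbf w(v)^{-1}$ in a suitable quotient, in Burgess fashion, reduces the problem to counting multiplicities $\nu(\mathbf x)$. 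I would then apply H\"older with exponent $2r$, which leaves the complete sum
\[
\sum_{\mathbf x\in\F_p^d}\Bigl|\sum_{u\leqslant U}\chi(D(\mathbf x+u\,\mathbf e))\Bigr|^{2r}
\]
together with the moments $\sum\nu^2$. The moment bound $\sum\nu(\mathbf x)^2\ll (HV)^{d}p^{o(1)}+\dots$ is the usual energy count for points $\mathbf a\,\mathbf w(v)^{-1}$ with $\mathbf a$ in a box and $v$ small. It should follow from elementary lattice-point counting, because the number of solutions of $a_jv'^j\equiv a'_jv^j$ is controlled when $HV<p$.

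After expansion, the complete sums are $d$-dimensional sums $\sum_{\mathbf x}\chi\bigl(\prod_i D(\mathbf x+u_i\mathbf e)\prod_i D(\mathbf x+u'_i\mathbf e)^{p-2}\bigr)$ indexed by tuples $(u_1,\dots,u_r,u'_1,\dots,u'_r)$. This is the main obstacle. We need square-root cancellation $p^{d/2}$ for all tuples outside a small exceptional set, and a bound on the size of that set. My plan is to invoke the stratification theorem of Xu (2020). It says that the complete sum has size $O(p^{d/2})$ outside a stratification of tuple space into subvarieties $Z_j$ of codimension $j$, on which the sum is $O(p^{(d+j)/2})$. What has to be verified is that the exceptional loci are small. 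For tuples that are not paired up to permutation, the product should not be a square times a character-trivial factor, and this follows from the irreducibility of the discriminant and the fact that distinct shifts give distinct irreducible hypersurfaces. The codimension of the higher strata then has to be estimated, which I expect is the most delicate geometric step.

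Feeding these bounds in and counting integer tuples $u_i\le U$ on each stratum gives a bound of roughly $U^{2r}p^{d/2}+U^{r}p^{d}$ per the usual Burgess bookkeeping. Optimising over $r$, $U$ and $V$ under $UV^d\asymp H$ yields a saving $p^{-\delta}$ exactly when $H>p^{1/2-1/(2(d^2+1))}$. The factor $d^2+1$ should come from the $d$ powers of $v$ combining with the $d$-dimensional box in the energy estimate.
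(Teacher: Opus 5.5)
There is a genuine gap, exactly at the step you flag as ``the most delicate''. Your shift $a_j\mapsto a_j+uv^j$ uses one scalar $u$ for all coordinates. After putting $x_j=a_j\overline{v}^{\,j}$, the complete sums are $\sum_{\bx\in\F_p^d}\prod_{i\le r}\chi\bigl(D(\bx+u_i\mathbf e)\bigr)\overline{\chi}\bigl(D(\bx+u_{i+r}\mathbf e)\bigr)$ with $\mathbf e=(1,\dots,1)$. All shift vectors lie on the line $\F_p\mathbf e$, so your tuple space is the $2r$-dimensional linear subspace $L=\{(u_1\mathbf e,\dots,u_{2r}\mathbf e)\}$ of $\A^{2rd}$.

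Xu's theorem (Lemma~\ref{lm:Xu}) stratifies the full shift space $\A^{2rd}$, with $\operatorname{codim}\cX_j\ge\vartheta_j$. Since $\vartheta_j\le dr\le 2r(d-1)=\operatorname{codim}L$, it says nothing about $\cX_j\cap L$, and $L$ may lie entirely inside some $\cX_j$. This does occur for one-direction shifts. For example, $F(x_1,x_2)=x_1^2+x_2$ is absolutely irreducible with trivial translation stabilizer, yet along $(0,1)$ every such sum is $p$ times a one-variable sum, hence typically of size $p^{3/2}$ rather than $p$.

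What you can prove directly along $\mathbf e$ (Weil in $t$ after writing $\bx=\mathbf y+t\mathbf e$) is only $p^{d-1/2}$. With that input your bookkeeping requires $H^dV>p^{d-1/2}$, i.e.\ $H>p^{(2d^2-d)/(2(d^2+1))}$, which is worse than P\'olya--Vinogradov. So your argument rests on an unproved new stratification theorem for this restricted family. Note also that Xu's strata have codimension $\vartheta_j$ growing with $r$ (e.g.\ $r-1$ for $j=d-1$), not codimension $j$. Codimension $j$ alone would not yield your claimed $U^{2r}p^{d/2}+U^rp^d$.

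The paper avoids this by shifting each coefficient independently, $a_j\mapsto a_j+u_jv^j$, with $\bu$ in a $d$-dimensional box $\cU$ of sides $U_j=H_j/V^j$. Twisted homogeneity (Lemma~\ref{lm:twistedhomogeneity}) then gives inner sums $\sum_{\bu\in\cU}\varepsilon(\bu)\chi(D(\bx+\bu))$, so the $2r$ shift vectors fill a genuine box $\cU^{2r}\subset\A^{2rd}$. Then:
\begin{itemize}
\item Xu's theorem applies verbatim, with its hypotheses supplied by Lemmas~\ref{lm:absolutelyirreducible} and~\ref{lm:shiftinvariant}.
\item Lemma~\ref{lm:Xu-counting} gives $\sharp(\cX_j\cap\cU^{2r})\ll(U_1\cdots U_d)^{2r}U_\varpi^{-\vartheta_j}$.
\item Only the top stratum needs the pairing argument you describe.
\end{itemize}
Taking $U_\varpi=p^{1/(2R)}$ and $V\approx H^{1/d}$ recovers your heuristic threshold $H^dV>p^{d/2}$.

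Finally, the second moment you state, $\sum\nu^2\ll(HV)^dp^{o(1)}$, would by itself give no gain beyond $H>p^{1/2}$. The bound actually needed is $\ll H^dVp^{o(1)}$. It is obtained by fixing one coordinate $t$ and counting $a_tw^t\equiv b_tv^t$ via Lemma~\ref{lm:ThXY}, the remaining coordinates then being determined.
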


In particular, one may easily see 
that Theorem~\ref{thm:Mobius-general} follows at once from Stickelberger's identity~\cite{St98} (see Lemma~\ref{lm:Stickelberger} below) linking the geometric M{\"o}bius function to quadratic characters 
of polynomial discriminants and the bound in Corollary~\ref{coro:disccharactersum-monic-range}.

Similarly, we have the following result for trinomials, implied by  Corollary~\ref{coro:disccharactersum-trinomial}.

\begin{theorem}\label{thm:Mobius-trinomial}
Let $d>k\geqslant1$ be fixed and $e=\gcd(d,k)$. For all $A_0,B_0\in \Z$ and $\gamma>0$, put 
\[
 \cT =\{X\mapsto X^d+aX^k+b:~A_0<a\leqslant A_0+p^\gamma,\
B_0<b\leqslant B_0+p^\gamma\}.
\]
For any fixed $\gamma$ with
\[
\frac{1}{2}-\frac{e}{2(2d+e)}<\gamma<1,
\]
there exists some $\delta>0$,   depending only on $d$ and $\gamma$, 
such that
\[
\fM (\cT) \ll p^{2\gamma-\delta},
\]
where the implied constant depends only on  $d$ and $\gamma$. 
\end{theorem}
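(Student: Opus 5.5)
Via Stickelberger's identity I reduce $\fM(\cT)$ to a short two-dimensional character sum with the trinomial discriminant, and then bound that sum with a Burgess-type argument. Write $f_{a,b}(X)=X^d+aX^k+b$. Stickelberger's theorem gives, for squarefree $f$ of degree $d$ over $\F_p$ with $p$ odd,
\[
\mu_p(f)=(-1)^d\Big(\frac{\Disc(f)}{p}\Big).
\]
When $\Disc(f)=0$ both sides vanish, so the identity holds for all monic $f$. Hence
\[
\fM(\cT)=(-1)^d\sum_{A_0<a\le A_0+p^\gamma}\ \sum_{B_0<b\le B_0+p^\gamma}\Big(\frac{\Disc(X^d+aX^k+b)}{p}\Big).
\]
The trinomial discriminant is explicit. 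With $e=\gcd(d,k)$, $d=ed_1$, $k=ek_1$, it equals, up to a sign and a nonzero constant,
\[
b^{k-1}\Big(d^{d_1}b^{d_1-k_1}-(-1)^{d_1}(d-k)^{d_1-k_1}k^{k_1}a^{d_1}\Big)^{e}.
\]
It therefore suffices to show that the character sum above is $\ll p^{2\gamma-\delta}$ in the stated range. This is what I take Corollary~\ref{coro:disccharactersum-trinomial} to be, and I would invoke it directly. Below is how I would prove that corollary.

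First I factor the character. The factor $b^{k-1}$ contributes $\chi(b)^{k-1}$, where $\chi$ is the Legendre symbol. The bracket is raised to the $e$-th power, so the summand becomes $\chi(b)^{k-1}\chi(G(a,b))^{e}$ with $G(a,b)=c_1b^{d_1-k_1}-c_2a^{d_1}$. If $e$ is even, the $G$-part is trivial away from its zeros. The sum then reduces to a short sum of $\chi(b)^{k-1}$, which is harmless when $k$ is even and, when $k$ is odd, is handled by Burgess in $b$ alone for $\gamma>1/4$; the zeros of $G$ contribute $O(p^{\gamma})$. So the real case is $e$ odd, where I must bound
\[
S=\sum_{a}\sum_{b}\chi\big(b^{k-1}G(a,b)\big).
\]

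For this I would run Burgess's method with non-linear shifts adapted to the weighted homogeneity of $G$. The key observation is that $G(t^{d_1-k_1}a,\,t^{d_1}b)=t^{d_1(d_1-k_1)}G(a,b)$. So I shift $(a,b)\mapsto(a+u\,t^{d_1-k_1},\,b+v\,t^{d_1})$, or more precisely I write the summation points as $(a,b)=(t^{d_1-k_1}x+\dots,\ t^{d_1}y+\dots)$ with small $t\le T$ and small shifts $u,v$. The argument then proceeds in four steps.
\begin{enumerate}
\item Average over shifts $u,v\le U$ and parameters $t\le T$, with $UT^{d_1}\approx p^{\gamma}$, so that shifted boxes stay inside a slightly enlarged box at a cost $O(p^{2\gamma}UT/\text{(main)})$.
\item Pull out the character of $t$ using the weighted homogeneity. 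This leaves sums over new variables $(x,y)=(a/t^{d_1-k_1},\,b/t^{d_1})$ in $\F_p^2$, counted with multiplicity $\nu(x,y)$.
\item Apply H\"older with exponent $2r$. This produces a bound for $\sum\nu^2$, obtained by counting solutions of the congruences $a t'^{\,d_1-k_1}\equiv a' t^{\,d_1-k_1}$ and $b t'^{\,d_1}\equiv b't^{\,d_1}$, together with a $2r$-th moment of a complete sum over $(x,y)\in\F_p^2$ of products $\prod_i\chi(\cdots(x+u_i,y+v_i))$.
\item Bound the complete $2r$-moments by $p^{2}$ times a square-root saving for generic tuples $(u_1,\dots,u_{2r},v_1,\dots,v_{2r})$. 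This is where Xu's stratification theorem enters: the set of tuples on which the complete sum fails to have size $O(p)$ is a subvariety of controlled codimension, and its points are counted trivially.
\end{enumerate}
Optimising $r$, $U$ and $T$ should give a nontrivial saving exactly when $\gamma>\frac12-\frac{e}{2(2d+e)}$. The $e$ enters through the reduced degrees $d_1,k_1$, which govern how large $T^{d_1}$ can be.

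I expect the main obstacle to be step 4, the stratification input. I must verify that, for the product of shifted copies of $b^{k-1}G(a,b)$, the relevant sheaf is generically geometrically irreducible of nontrivial quadratic type, i.e. that the product is not a square times a constant outside a small set of tuples. I must also show that the dimension of the exceptional strata matches what the counting requires. I would first try to handle this by reducing to the one-variable situation: fix the ratio structure along weighted lines, where $G$ becomes a polynomial in one variable with distinct roots, and check non-squareness there via distinct shifts.
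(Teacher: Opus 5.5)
Your top-level argument is the paper's. Stickelberger's identity and Lemma~\ref{lm:trinomial-discriminant} reduce $\fM(\cT)$ to Corollary~\ref{coro:disccharactersum-trinomial} with $h=2$ and $A=B=p^{\gamma}$. Its hypothesis $\min\{AB^{1+e/d},A^{1+e/(d-k)}B\}\ge p^{1+\varepsilon}$ becomes $\gamma(2+e/d)>1$, since $e/d<e/(d-k)$. That is exactly $\gamma>\frac{d}{2d+e}=\frac12-\frac{e}{2(2d+e)}$; when $2\mid e$ only $\gamma>\frac14$ is needed.

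Your sketch of the corollary also follows the paper. The proof of Theorem~\ref{thm:doublecharactersum} uses precisely your shifts $(a+uw^{n_*},b+vw^{m_*})$ with $n_*=(d-k)/e$, $m_*=d/e$, then H\"older, Lemma~\ref{lm:ThXY} and Lemma~\ref{lm:Xu}. Some points need correcting, however.

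\textbf{Parity when $e$ is even.} Your bookkeeping here is backwards. Since $e\mid k$, $e$ even forces $k$ even, so $\chi(b)^{k-1}=\chi(b)$ is non-principal, and this is exactly the case that needs Burgess. Your branch ``$k$ odd'' never occurs; if it did, $\chi^{k-1}$ would be principal and there would be no cancellation at all.

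\textbf{Exponent of $b$ when $e$ is odd.} Replace $b^{k-1}$ by $b^{\ell}$ with $\ell\in\{0,1\}$ and $\ell\equiv k-1\pmod 2$, so that the function fed to Xu's theorem is square-free.

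\textbf{The range must be derived.} Take the shift length $U$ a small power of $p$ (the paper uses $p^{1/(2(r-1))}$). The method then gives $(AB)^{1-\frac{1}{2r}}T^{-\frac{1}{2r}}p^{\frac{1}{2r}+o(1)}$. The binding constraint is the $b$-direction, $UT^{d/e}\le p^{\gamma-\eta}$ with small $\eta>0$; with $UT^{d/e}\approx p^{\gamma}$ and sharp cut-offs, the boundary error of the shifts is already of trivial size. Nontriviality means $ABT>p^{1+\varepsilon}$, which as $r\to\infty$ is again $\gamma(2+e/d)>1$.

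\textbf{Step 4.} What you single out as the main obstacle is routine, and the fallback you propose would not suffice. Write $G(x,y)=c_1y^{(d-k)/e}-c_2x^{d/e}$ and $\varphi=y^{\ell}G$. Lemma~\ref{lm:Xu} needs only two inputs.
\begin{enumerate}
\item[(i)] $\varphi$ is square-free. This holds because $G$ is coprime to $x$ and to $y$, so $\gcd(G,\partial_xG)=1$.
\item[(ii)] $\varphi$ has a finite translation stabiliser. For $\ell=1$, putting $y=0$ in $\varphi(x+a,y+b)=\varphi(x,y)$ gives $b\,G(x+a,b)\equiv0$, so $b=0$ and then $a=0$. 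For $\ell=0$, iterating gives $G(x+ua,y+ub)=G(x,y)$ for all $u\in\F_p$. Differentiating in $u$ (legitimate since $\deg G<p$) gives $a\,\partial_xG+b\,\partial_yG\equiv0$, which is impossible for $(a,b)\ne(0,0)$ because $\deg_xG=d/e\ge2$.
\end{enumerate}
Xu's theorem then supplies the strata itself. The complete sum is $\ll p$ off a set of codimension $\ge r-1$, and $\ll p^{3/2}$ off a set of codimension $\ge 2r$. You never need to analyse non-squareness of products of shifts.

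That analysis is in any case the wrong target. Non-squareness, or one-variable Weil bounds along a family of curves, only gives $p^{3/2}$ for generic shift tuples. Feeding $p^{3/2}$ into the same H\"older argument yields a nontrivial bound only when $ABT>p^{3/2}$, i.e.\ $\gamma>\frac{3d}{2(2d+e)}>\frac12$, which is no better than P\'olya--Vinogradov. What makes the range below $p^{1/2}$ possible is the genuinely two-dimensional square-root cancellation off $\cX_1$, which is the real content of Xu's theorem.
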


The values of $\delta$ in Theorems~\ref{thm:Mobius-general} and~\ref{thm:Mobius-trinomial} can be made explicit
in terms of $d$ and $\gamma$, and the sides of the boxes therein do not necessarily have the same length. Moreover, if $e$ is even in Theorem~\ref{thm:Mobius-trinomial}, one may enlarge the range of $\gamma$ to $\frac{1}{4}<\gamma<1$. This follows from the classical Burgess bound~\eqref{eq:Burgess} for incomplete character sums, as one may see from the proof of Corollary~\ref{coro:disccharactersum-trinomial}.

Other polynomial families  (with some fixed coefficients) are certainly of interest as well and can be considered
via our method. We just illustrate our ideas in two extreme cases: where, besides the leading coefficient,  
all coefficients vary (as in $\cG$ in~\eqref{eq:F1}), and all but two coefficients are set to zero 
(as in $\cT$ in~\eqref{eq:F2}).

Note that in the family $\cF_0$ given by~\eqref{eq:F0}, one orders the polynomials by their degrees, so that the tail of the central polynomial is varying with all coefficients running over the complete $\F_p$. This situation enjoys a lot of algebraic symmetries, which are unfortunately lost in our cases  $\cG$ and $\cT$. In Section~\ref{sec: shifts}, we explain the underlying ideas in our arguments, which are built on incomplete character sums over finite fields.

\subsection{Our approach via  character sums and shifts ``{\`a} la carte''}
\label{sec: shifts}
In contrast to the geometric arguments of Sawin~\cite{Sa21,Sa24}, we study the average $\fM(\cG)$ via quadratic character sums over discriminants, based on a basic fact due to Stickelberger~\cite{St98} (see Lemma~\ref{lm:Stickelberger} below).
Roughly speaking, we work with the character sum
\begin{equation}\label{eq:discsum-generalshape}
\sum_{f\in \cF}\(\frac{\Disc(f)}{p}\),
\end{equation}
where $f$ runs over certain sets $\cF$ of polynomials with all coefficients lying in intervals of length $H$, and $\Disc(f)$ denotes the discriminant polynomial of $f$ (see~\eqref{eq:discriminant-definition} below for the precise definition).
Note that $\Disc(f)$ is a homogeneous multivariate polynomial in all coefficients of $f$, but this homogeneity
breaks down when the coefficient of the leading term is fixed as $1$. On the other hand, Lemma~\ref{lm:twistedhomogeneity} below shows a {\it twisted homogeneity} holds which turns out to be very essential in our arguments.

We would like to recall that Pierce and Xu~\cite{PX20} developed the Burgess method, and succeeded in estimating character sums over homogeneous polynomials (say, forms) below the P\'olya--Vinogradov range in a very general setting. Unfortunately in our situation, the multivariate polynomial $\Disc(f)$ is no longer homogeneous if $f$ is restricted to monic polynomials, with all other coefficients varying.
To overcome this obstacle we introduce 
a new type of amplification via non-linear shifts ``{\it{{\`a} la carte}}'',  which allows us to bound the character sum~\eqref{eq:discsum-generalshape}, and hence the sums $\fM (\cG)$ and $\fM (\cT)$ with power-savings, also
beyond the P\'olya--Vinogradov range. Indeed, the shifts ``{\it{{\`a} la carte}}'' are built on the 
{\it twisted homogeneity} of discriminants of monic polynomials as explained in Lemma~\ref{lm:twistedhomogeneity}.

\subsection{Background on character sums}

Given a polynomial $F$ with integral coefficients in $n\geqslant1$ variables, analytic number theorists quite often need to estimate  the character sum
\[
S_\chi(\cB,F) = \sum_{\bx\in\cB}\chi(F(\bx)),
\]
where $\cB$ is a box in $\F_p^n$ and $\chi$ is a non-trivial Dirichlet  
character modulo $p$.  The heart of this problem is to beat the trivial upper bound 
$|S_\chi(\cB,F)| \le \sharp\cB$ with $\cB$ as sparse as possible. Note that, in the above-mentioned applications to sums of the geometric M\"obius function, $\chi$ is quadratic, and $F$ is the discriminant of a monic polynomial.

The special case $n=\deg F=1$ has received quite a lot of attention due to many deep applications in number theory. 
For an interval $\cI \subseteq\Z$, for the sum 
\[
S_\chi(\cI)=\sum_{a\in \cI}\chi(a)
\]
a classical work by P\'olya~\cite{Pol18} and Vinogradov~\cite{Vi18} yields
\begin{equation}\label{eq:P-V}
|S_\chi(\cI)|\leqslant \sqrt{p}\log p.
\end{equation}
This is a direct consequence of Fourier analysis, and it is non-trivial as long as $\sharp\cI >p^{\frac{1}{2}+\varepsilon}$ for any $\varepsilon>0$, known as the {\it P\'olya--Vinogradov barrier}. 
Burgess~\cite{Bu57,Bu62a,Bu62b,Bu63} has introduced an ingenious method to deal with very short character sums, and proved that for each $r\in\N$, 
\begin{equation}\label{eq:Burgess}
|S_\chi(\cI)|\leqslant  \sharp\cI^{1-\frac{1}{r}}p^{\frac{r+1}{4r^2}+o(1)},
\end{equation}
which is non-trivial as long as the length   of $\cI$ satisfies $\sharp\cI>p^{\frac{1}{4}+\varepsilon}$ for any $\varepsilon>0$.

We now turn to the case $n\geqslant2$.
It should be pointed out that Burgess's method (see~\cite[Theorem~12.6]{IK04} for a modern version) relies heavily on the multiplicativity and periodicity of $\chi$, and it does not directly apply to a general inhomogeneous
polynomial with $n=1$ and $\deg F\geqslant2$, in which case one may employ the arguments of P\'olya--Vinogradov instead. Indeed, the bound~\eqref{eq:P-V}  can  easily be extended to arbitrary $n$-variate polynomials $F$ and thus, in non-degenerate cases, leading to the bound 
\begin{equation}\label{eq:PVrange}
S_\chi(\cB,F) = O(H^{n-1} \sqrt{p}\log p),   
\end{equation}
where $H$ is the largest side length of the box $\cB$. 

However, for $n\geqslant2$ and $\deg F\geqslant2$, there exist many works going beyond the P\'olya--Vinogradov barrier by imposing 
certain symmetries on $F$. Among these we would like to mention another work of Burgess~\cite{Bu68}, 
Bourgain and Chang~\cite{BC10}, Chang~\cite{Cha09}, Pierce and Xu~\cite{PX20}  and recent works by
Chu~\cite{Chu25,Chu26}. One common feature in the above works before~\cite{Chu26}
is that the polynomial is homogeneous 
in $n\geqslant2$ variables. 
It seems that~\cite{Chu26} is the first one that can go beyond the P\'olya--Vinogradov barrier in a very large framework of inhomogeneous polynomials. We also note that Chu~\cite{Chu26} achieves the power-saving bounds when the number $n$ of variables is much larger than the degree $d$ of the polynomial, say $n\geqslant 2^{d-2}(d-1)^2(d+1)$ under some geometric conditions, however, the individual  admissible ranges
of variables in~\cite{Chu26}  are strikingly short. The argument of~\cite{Chu26} is quite different and is based on the recent progress towards Vinogradov's main conjecture.

\subsection{New results about character sums}
Keeping in mind applications to sums of the geometric M\"obius function as given in Theorems~\ref{thm:Mobius-general} and~\ref{thm:Mobius-trinomial}, we study short character sums over special inhomogeneous polynomials, that is, discriminant polynomials of general monic polynomials and trinomials. In both cases, the number of variables can be considerably close to the degree of the polynomial.

Recall that for a polynomial $f(X)\in\C[X]$ of degree $d$ given by
\[
f(X)=a_dX^d+\cdots +a_1X+a_0,\quad  (a_i\in\C,~a_d\neq0),
\]
we have the formula for the {\it discriminant}
\begin{equation}\label{eq:discriminant-definition}
\Disc(f)
=a_d^{2d-2} \prod_{i<j}(r_i-r_j)^2
=(-1)^{d(d-1)/2}a_d^{2d-2}\prod_{i\neq j}(r_i-r_j),
\end{equation}
where $r_1,r_2,\cdots,r_d$ are all the roots of $f$. 
The discriminant is very important in the sense that
it reflects some properties of roots of the polynomial without computing them. 

Let $p$ be a large prime, and $\chi$ a Dirichlet character modulo $p$. Given a chosen finite family $\cF$ of polynomials, we form the character sum
\[
\fS_{\chi} (\cF) =\sum_{f\in\cF}\chi(\Disc(f)).
\]
In particular, we specialise $\cF\in\{\cG,\cT\}$ as given by~\eqref{eq:F1} and~\eqref{eq:F2}.

We now state our bound for incomplete character sums associated to the family~\eqref{eq:F1}.

\begin{theorem}\label{thm:disccharactersum-monic}
Let $h,d\geqslant2$ be fixed integers.
Let $\chi$ be a Dirichlet character modulo $p$ with order $h$. 
For all $N_1, \ldots, N_d\in \Z$ and positive integers $H_1, \ldots, H_d\leqslant p$,  put 
\[
\cB=\Z^d\cap\prod_{1\leqslant j\leqslant d}]N_j,N_j+H_j],
\]
and
\[
\cG=\{X\mapsto X^d+a_1X^{d-1}+\cdots+a_{d-1}X+a_d:~(a_1,\cdots,a_{d-1},a_d)\in\cB\}.
\]
For each integer $r\geqslant2$, put 
\[
\varTheta=\fl{(r-1)/(d-1)} \mand R=r-1-(d-2)\varTheta.
\]
Define $\varpi$ to be the smallest index $j\in\{1,\cdots,d-1,d\}$ which minimises the powers 
$\varDelta_j:=(H_jp^{-\frac{1}{2R}})^{1/j}:$
\[
\varpi
= \min\{1\leqslant j\leqslant d:~\varDelta_j\leqslant \varDelta_k\text{ for all }1\leqslant k\leqslant d\}.
\]

Then for all sufficiently large prime $p$, we have
\begin{align*}
|\fS_{\chi} (\cG)|
&\le (H_1\cdots H_d)^{1-\frac{1}{2r}}H_\varpi^{-\frac{1}{2\varpi r}}p^{\frac{d}{2r}-\frac{1}{4r}-\frac{\varpi(r-1)-1}{4\varpi rR}+o(1)}\\
&\qquad \qquad +(H_1\cdots H_d)^{\frac{1}{2}-\frac{1}{2r}}H_\varpi^{\frac{d(d+1)}{4\varpi}-\frac{1}{2\varpi r}}p^{\frac{d}{2r}-\frac{rd(d+1)-2}{8\varpi rR}+o(1)},
\end{align*}
provided that
\begin{equation}\label{eq:HjHvarpi}
H_\varpi>p^{\frac{1}{2R}}, \ \ \ \min_{1\leqslant j\leqslant d}(H_j)^\varpi H_\varpi <p^{\varpi+\frac{1}{2R}}.\end{equation}
\end{theorem}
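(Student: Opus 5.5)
We will run a Burgess-type argument in which the usual linear shift $x\mapsto x+uv$ is replaced by the non-linear shift dictated by the twisted homogeneity of Lemma~\ref{lm:twistedhomogeneity}. For $\lambda\in\F_p^*$ that lemma gives
\[
\Disc(X^d+\lambda a_1X^{d-1}+\lambda^2a_2X^{d-2}+\cdots+\lambda^da_d)=\lambda^{d(d-1)}\Disc(X^d+a_1X^{d-1}+\cdots+a_d).
\]
Since $\chi$ is multiplicative, $\chi(\Disc)$ is therefore invariant, up to the unimodular factor $\chi(\lambda)^{d(d-1)}$, under the weighted scaling $a_j\mapsto\lambda^ja_j$.

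\textbf{Amplification.} Pick parameters $U,V$ and shift $\bm{a}=(a_1,\dots,a_d)$ by
\[
a_j\mapsto a_j+u_j v^{j},\qquad 1\le j\le d .
\]
Here $v\le V$ is a scalar and $u_j\le U_j$ with $U_jV^j\le H_j$. This keeps the shifted point in a box of essentially the same size, at a cost of $O(U_jV^j)$ boundary error in the $j$-th coordinate. The constraint $V^j\lesssim H_j$ explains why the minimiser $\varpi$ of $\varDelta_j=(H_jp^{-1/(2R)})^{1/j}$ controls the admissible $V$; we take $V\approx p^{1/(2R)}$-scaled as in $\varDelta_\varpi$. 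Averaging over the shifts and writing $a_j+u_jv^j=v^j(a_jv^{-j}+u_j)$, twisted homogeneity gives
\[
\fS_\chi(\cG)\approx \frac{1}{\#\{\bu\}V}\sum_{\bm a\in\cB}\sum_{v}\Big|\sum_{\bu}\chi\big(\Disc(X^d+(\bx+\bu)\ast)\big)\Big|,
\]
where $\bx=(a_jv^{-j})_j\in\F_p^d$. We then count the multiplicity $\nu(\bx)$ of each $\bx$ arising from pairs $(\bm a,v)$.

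\textbf{Hölder.} Apply Hölder with exponent $2r$. This requires a bound on $\sum\nu(\bx)^2$, i.e.\ on the number of solutions of $a_jv'^j\equiv a'_jv^j \pmod p$. We get this by a lattice-point count. It is here that the second condition in~\eqref{eq:HjHvarpi} (smallness of $\min_j H_j^\varpi H_\varpi$) is used, to force the congruence to be essentially an equality. It remains to bound the $2r$-th moment
\[
\sum_{\bx\in\F_p^d}\Big|\sum_{\bu}\chi(\Disc(\bx+\bu))\Big|^{2r}.
\]
Expanding gives complete sums $\sum_{\bx}\chi\big(\prod_{i\le r}D(\bx+\bu_i)\overline{\prod_{i>r}D(\bx+\bu_i)}\big)$ over $\F_p^d$. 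Here the $r$ shifts do not all live in the same coordinates: the parameters $\varTheta,R$ record how $r-1$ shifts are distributed, with the $d-1$ "cheap'' coordinates each absorbing $\varTheta$ and $R$ left for the key coordinate. For these complete sums we invoke Xu's stratification theorem. The sum is $O(p^{d/2})$, i.e.\ square-root cancellation, off a stratification of the tuples $(\bu_1,\dots,\bu_{2r})$ whose strata of codimension $j$ allow only a loss $p^{(d+j)/2}$. Combining the generic contribution, which yields the first term of the bound, with the degenerate diagonal strata, which give the second term (the exponent $d(d+1)/4\varpi$ reflects the weights $1+\cdots+d$), and then optimising $V$ subject to $H_\varpi>p^{1/(2R)}$, should produce exactly the displayed exponents.

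\textbf{Main obstacle.} The hard step is verifying Xu's hypotheses. Concretely, we must show that for tuples $(\bu_i)$ outside a controlled exceptional set, $\prod_i \Disc(\bx+\bu_i)^{\pm1}$ is not an $h$-th power and defines a sheaf with the required generic vanishing. We must also bound the dimension of the exceptional set to get the stratum sizes. We would try first to prove that the discriminant hypersurface is absolutely irreducible and that its translates by distinct $\bu$ are distinct. Then a product with an unmatched factor cannot be an $h$-th power, and matching tuples form a diagonal-type variety whose dimension gives the second term.
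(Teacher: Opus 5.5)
Your outline has the same architecture as the paper: shifts $a_j\mapsto a_j+u_jv^j$ justified by Lemma~\ref{lm:twistedhomogeneity}, grouping by $\bx=(a_j\overline{v}^{\,j})_j$, H\"older with exponent $2r$, Xu's stratification fed by absolute irreducibility of $\Disc$ and triviality of its translation stabilizer, and the ``every vector occurs at least twice'' count for the top stratum. However, the step that actually produces the exponents is missing, and what you put in its place is not correct.

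Lemma~\ref{lm:Xu} already supplies the strata and the bounds once $h$-th power-freeness and finiteness of the stabilizer are checked. You do not have to prove generic vanishing yourself; what you must supply is the bookkeeping. In that lemma the index $j$ is not the codimension. Off $\cX_j$ the complete sum is $\ll p^{(d+j-1)/2}$, and the point is that $\operatorname{codim}\cX_j$ grows with $r$. With $(n,r)\leftarrow(d,2r)$ it is at least $j\varTheta$ for $j\le d-2$, at least $r-1$ for $j=d-1$, and at least $dr$ for $j=d$. If the $j$-th stratum only had codimension $j$, it would cost $(p^{1/2}/U_\varpi)^j$ relative to the generic term, which rules out anything below $H_\varpi\approx p^{1/2}$.

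Points of $\cX_j$ in the lopsided box $\cU^{2r}$ are then counted by Lemma~\ref{lm:Xu-counting}. Since these codimensions are $<2r$ for $j\le d-1$, each unit of codimension costs a factor of the smallest side $U_\varpi=\min_jU_j$, which occurs $2r$ times. For $j=d$ that lemma is too weak with unequal sides, and one uses the diagonal count $\sharp\cE_d\ll(U_1\cdots U_d)^r$ instead. This gives
\[
\Sigma_r\ll (U_1\cdots U_d)^{2r}p^{\frac d2}\bigl(1+U_\varpi^{-(d-2)\varTheta}p^{\frac d2-1}+U_\varpi^{1-r}p^{\frac{d-1}{2}}\bigr)+(U_1\cdots U_d)^{r}p^{d}.
\]
This is the only place $\varTheta$ and $R$ enter. $R=(r-1)-(d-2)\varTheta$ is the gap between the codimension bounds of $\cX_{d-1}$ and $\cX_{d-2}$, and balancing these two strata gives $U_\varpi=p^{1/(2R)}$. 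Your reading of $\varTheta,R$ as a distribution of the $r-1$ shifts among coordinates has no counterpart in the argument and cannot produce these numbers. Then $V=(H_\varpi/U_\varpi)^{1/\varpi}=\varDelta_\varpi$. The definition of $\varpi$ is exactly the requirement $U_j\ge U_\varpi$ for all $j$ that the point count needs, and the first condition in~\eqref{eq:HjHvarpi} is $V>1$. Also, the first term of the theorem comes from the balanced strata $\cX_{d-2},\cX_{d-1}$, not from the generic stratum, whose contribution is dominated.

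Two further steps would not work as you describe them.

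First, for $\sum_{\bx}\rho(\bx)^2$ one must count solutions of $a_tw^t\equiv b_tv^t \pmod p$ with $H_t=\min_jH_j$. This cannot be forced into an equality. The intervals $]N_t,N_t+H_t]$ are arbitrarily placed, and the second condition in~\eqref{eq:HjHvarpi} only gives $H_tV<p$ (for $V=\varDelta_\varpi$), not $H_tV^t<p$. The paper uses the multiplicative-energy bound of Lemma~\ref{lm:ThXY}, which rests on the fourth moment of Lemma~\ref{lm:ACZ} and on Burgess. A bare lattice-point count does not obviously give $H_1\cdots H_dVp^{o(1)}$.

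Second, the optimisation takes $U_jV^j=H_j$, i.e.\ shifts as long as the box. A boundary error $O(U_jV^j)$ in the $j$-th coordinate is then as large as the trivial bound. The paper instead inserts the piecewise-linear weights $w_j$ and separates the variables by Fourier inversion.
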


Since it is not immediately clear where the bound of Theorem~\ref{thm:disccharactersum-monic} is non-trivial, we now present its simplified version for hypercubes. In fact, one has $\varpi=d$ in Theorem~\ref{thm:disccharactersum-monic} when  $H_1= \cdots = H_d$ therein. This observation leads to the following consequence.

\begin{corollary}\label{coro:disccharactersum-monic}
Under the conditions of  Theorem~\ref{thm:disccharactersum-monic}, if $H_1= \cdots = H_d = H$, we have
\[
|\fS_{\chi} (\cG)|
\le H^{d-\frac{d}{2r}-\frac{1}{2dr}}p^{\frac{d}{2r}-\frac{1}{4r}-\frac{d(r-1)-1}{4d rR}+o(1)}
  +H^{\frac{3d+1}{4}-\frac{d}{2r}-\frac{1}{2dr}}p^{\frac{d}{2r}-\frac{rd(d+1)-2}{8rdR}+o(1)},
\]
provided that
\[
p^{\frac{1}{2R}}<H<p^{\frac{d}{d+1}+\frac{1}{2(d+1)R}}.
\]
\end{corollary}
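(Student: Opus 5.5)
This is a direct specialisation of Theorem~\ref{thm:disccharactersum-monic}. The plan is to show that $\varpi=d$ when all side lengths equal $H$, and then substitute $H_1=\cdots=H_d=H$ and $\varpi=d$ into the bound and into the side conditions~\eqref{eq:HjHvarpi}.

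\emph{Step 1: identify $\varpi$.} With $H_j=H$ for all $j$, we have $\varDelta_j=(Hp^{-\frac{1}{2R}})^{1/j}$. Under the lower bound $H>p^{\frac{1}{2R}}$ the base $x=Hp^{-\frac{1}{2R}}$ exceeds $1$. Hence $j\mapsto x^{1/j}$ is strictly decreasing, and the minimum over $1\le j\le d$ is attained only at $j=d$. Therefore $\varpi=d$. The hypothesis $p^{\frac{1}{2R}}<H$ is exactly what makes this valid; it is also the first condition $H_\varpi>p^{\frac1{2R}}$ in~\eqref{eq:HjHvarpi}.

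\emph{Step 2: check the side conditions.} With $\varpi=d$, the second condition of~\eqref{eq:HjHvarpi} reads $H^{d}\cdot H<p^{d+\frac{1}{2R}}$. This is equivalent to $H<p^{\frac{d}{d+1}+\frac{1}{2(d+1)R}}$, which is the stated upper bound. So both conditions follow from the assumed range for $H$.

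\emph{Step 3: substitute into the bound.} Put $H_1\cdots H_d=H^d$ and $H_\varpi=H$ with $\varpi=d$.

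For the first term, the $H$-exponent is $d(1-\frac1{2r})-\frac{1}{2dr}=d-\frac{d}{2r}-\frac1{2dr}$. The $p$-exponent is $\frac d{2r}-\frac1{4r}-\frac{d(r-1)-1}{4drR}$, as stated.

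For the second term, the $H$-exponent is $d(\frac12-\frac1{2r})+\frac{d(d+1)}{4d}-\frac1{2dr}=\frac d2+\frac{d+1}{4}-\frac d{2r}-\frac1{2dr}=\frac{3d+1}{4}-\frac d{2r}-\frac1{2dr}$. The $p$-exponent is $\frac d{2r}-\frac{rd(d+1)-2}{8drR}$.

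These match the claimed bound. No step is a genuine obstacle; the only point needing care is Step 1, namely that the strict inequality $x>1$ makes $j=d$ the unique minimiser, so the ``smallest index'' convention in the definition of $\varpi$ does not intervene.
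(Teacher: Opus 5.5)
Your proposal is correct and follows the paper's own route: the paper likewise observes that $\varpi=d$ when $H_1=\cdots=H_d$, substitutes into Theorem~\ref{thm:disccharactersum-monic}, and translates~\eqref{eq:HjHvarpi} into the stated range for $H$. Your explicit remark that $\varpi=d$ relies on $Hp^{-\frac{1}{2R}}>1$ (otherwise the smallest-index convention would give $\varpi=1$) is a detail the paper leaves implicit.
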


Taking $r$ sufficiently large with $(d-1)\mid (r-1)$, we conclude the following corollary.

\begin{corollary}\label{coro:disccharactersum-monic-range}
Under the conditions of  Theorem~\ref{thm:disccharactersum-monic}, take $H_1= \cdots = H_d=p^\gamma$. For all $\gamma$ with
\[
\frac{1}{2}-\frac{1}{2(d^2+1)}<\gamma<1,
\]
there exists some constant $\delta>0$, depending only on $d$, $h$ and $\gamma$, such that
\[
\fS_{\chi} (\cG) \ll H^d p^{-\delta},
\]
where the implied constant  
depends on $d$, $h$ and $\gamma$.
\end{corollary}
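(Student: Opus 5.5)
We derive Corollary~\ref{coro:disccharactersum-monic-range} from Corollary~\ref{coro:disccharactersum-monic} by setting $H=p^\gamma$, choosing $r$ large with $(d-1)\mid(r-1)$, and checking the exponents. If $r-1=(d-1)m$, then $\varTheta=\fl{(r-1)/(d-1)}=m$ and
\[
R=r-1-(d-2)m=m=\frac{r-1}{d-1}.
\]
Hence $1/R=(d-1)/(r-1)$, which is roughly $(d-1)/r$ for large $r$. The range condition $p^{1/(2R)}<H<p^{\frac{d}{d+1}+\frac{1}{2(d+1)R}}$ becomes
\[
\frac{d-1}{2(r-1)}<\gamma<\frac{d}{d+1}+\frac{d-1}{2(d+1)(r-1)}.
\]
The left inequality holds once $r$ is large. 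The right one fails for $\gamma$ close to $1$. For that regime I would use the P\'olya--Vinogradov bound~\eqref{eq:PVrange} instead, which gives $H^{d-1}p^{1/2}\log p=H^dp^{-(\gamma-1/2)+o(1)}$. This is a power saving whenever $\gamma>1/2$, and in particular for all $\gamma\ge d/(d+1)$. So I only need Corollary~\ref{coro:disccharactersum-monic} for $\gamma\le d/(d+1)$, where the upper constraint is automatic.

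Next I compute the savings from the two terms, writing $\log_p$ of each divided by $H^d$ and expanding in $1/r$ with $1/R\approx (d-1)/r$.

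The first term has exponent
\[
-\frac{d\gamma}{2r}-\frac{\gamma}{2dr}+\frac{d}{2r}-\frac1{4r}-\frac{(d(r-1)-1)(d-1)}{4dr(r-1)}.
\]
The last fraction tends to $(d-1)/(4r)$. Multiplying through by $4dr$ leaves, to leading order,
\[
-2d^2\gamma-2\gamma+2d^2-d-d(d-1)=-2\gamma(d^2+1)+d^2.
\]
This is negative exactly when $\gamma>\frac{d^2}{2(d^2+1)}=\frac12-\frac{1}{2(d^2+1)}$, which is the threshold in the statement.

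The second term has exponent
\[
\gamma\Bigl(\frac{3d+1}{4}-d\Bigr)+O\Bigl(\frac1r\Bigr)-\frac{(rd(d+1)-2)(d-1)}{8rd(r-1)}.
\]
Its leading part is
\[
-\gamma\,\frac{d-1}{4}-\frac{(d+1)(d-1)}{8(r-1)}+O\Bigl(\frac1r\Bigr).
\]
For $r$ large the $-\gamma(d-1)/4$ term dominates and makes the exponent negative by a fixed amount. The $O(1/r)$ terms coming from $H^{-d/(2r)}p^{d/(2r)}$ can be absorbed once $r$ is large enough in terms of $d$ and $\gamma$.

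I expect the main obstacle to be the bookkeeping for the first term. The saving there is only of size $1/r$, so the $1/r^2$ corrections, which come from $R=(r-1)/(d-1)$ rather than $r/(d-1)$ and from the $-1$ in $d(r-1)-1$, must be tracked exactly rather than discarded.

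To do this I would multiply the first exponent by $4dr(r-1)$ and write it as a polynomial in $r$. Its leading coefficient is $d^2-2\gamma(d^2+1)<0$. Consequently, for $\gamma$ strictly above the threshold, the exponent is at most $-c/r$ for some $c=c(d,\gamma)>0$ once $r\ge r_0(d,\gamma)$.

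Finally I fix such an $r$ satisfying $(d-1)\mid(r-1)$. Then $\delta$ is half the minimum of the two negative exponents, and half the P\'olya--Vinogradov saving in the large-$\gamma$ regime. This $\delta$ depends only on $d$ and $\gamma$. The dependence on $h$ enters only through the $o(1)$ terms and the implied constants inherited from Theorem~\ref{thm:disccharactersum-monic}.
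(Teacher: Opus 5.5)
Your proposal is correct and follows the paper's own route: you specialise Corollary~\ref{coro:disccharactersum-monic} with $H=p^\gamma$ and $r$ large with $(d-1)\mid(r-1)$, so that $R=(r-1)/(d-1)$, and the exact identity ($4dr$ times the first-term exponent $=d^2-2\gamma(d^2+1)+\frac{d-1}{r-1}$) produces exactly the stated threshold, while the range $\gamma\ge d/(d+1)$ is covered by the P\'olya--Vinogradov bound~\eqref{eq:PVrange}, as the paper also remarks. One wording fix: $\delta$ should be half of the \emph{smaller} of the two savings, not half of the minimum of the two negative exponents.
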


In fact, a direct consequence of Corollary~\ref{coro:disccharactersum-monic}
requires $\gamma<d/(d+1)$ in Corollary~\ref{coro:disccharactersum-monic-range}.
The validity of $d/(d+1)\leqslant \gamma<1$ can be guaranteed by~\eqref{eq:PVrange} or by splitting the original box into smaller ones, to each of them one then applies Corollary~\ref{coro:disccharactersum-monic}.
Combined with Lemma~\ref{lm:Stickelberger}, Corollary~\ref{coro:disccharactersum-monic-range} immediately implies Theorem~\ref{thm:Mobius-general}. 

We now move to bounding $\fS_{\chi} (\cT)$ with $\cT$ given by~\eqref{eq:F2}. 
Each element $f\in\cT$ is a trinomial, with many vanishing coefficients, so that $\cT$ forms a very thin family. Fortunately, the discriminant of such $f$ can be expressed explicitly in terms of the coefficients of $f$ (see Lemma~\ref{lm:trinomial-discriminant}),  reducing the problem to estimating the double character sum
\[
S_{\fs,\chi} (\cA,\cB) = \sum_{a \in \cA}\sum_{b \in \cB} \chi(a^\kappa b^\ell(sa^m + tb^n)^j)
\] 
with 
\begin{equation}\label{eq:AB-intervals}
\cA= ]A_0,A_0+A]\cap\Z \mand \cB=]B_0,B_0+B]\cap\Z,
\end{equation}
where $A_0,B_0\in\Z$, $A,B\in\Z^+$ and $\fs$ is the septuple 
\begin{equation}
\label{eq:septuple}
\fs=(j,\kappa,\ell,m,n,s,t)\in\Z^7
\end{equation}
with $\kappa,\ell\geqslant0$, $j,m,n\geqslant1$ and $p\nmid st$.

\begin{theorem}\label{thm:doublecharactersum}
Let $h\geqslant2$ be a fixed integer.
Let $\chi$ be a Dirichlet character modulo $p$ of order $h$. 
Assume that $\fs$ is in~\eqref{eq:septuple} such that 
\[
0 \le \kappa, \ell < h, \qquad  1\leqslant j<h,\qquad m,n \ge 1,   \qquad st \not \equiv 0 \bmod p.
\] 
Let $r\geqslant3$ be any positive integer such that  $\cA$ and $\cB$  in~\eqref{eq:AB-intervals} satisfy
\[
p^{\frac{1}{2(r-1)}}\leqslant A,B<p/2.
\]
Then for all sufficiently large prime $p$, we have
\[
|S_{\fs,\chi} (\cA, \cB)| \le 
(AB)^{1-\frac{1}{2r}} p^{\frac{1}{2r}+o(1)} \(\(A^{-1}p^{\frac{1}{2(r-1)}}\)^{\frac{1}{n_*}}
+\(B^{-1}p^{\frac{1}{2(r-1)}}\)^{\frac{1}{m_*}}\)^{\frac{1}{2r}}, 
\] 
where $m_*=m/\gcd(m,n)$ and  $n_*=n/\gcd(m,n)$.
\end{theorem}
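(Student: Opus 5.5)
We follow Burgess's method, replacing the usual linear shifts by shifts adapted to the twisted homogeneity of $sa^m+tb^n$. Put $g=\gcd(m,n)$, so that $m=gm_*$ and $n=gn_*$. For $\lambda\in\F_p^*$ we have
\[
s(\lambda^{n_*}a)^m+t(\lambda^{m_*}b)^n=\lambda^{gm_*n_*}(sa^m+tb^n).
\]
Hence the summand $\chi(a^\kappa b^\ell(sa^m+tb^n)^j)$ changes only by the constant factor $\chi(\lambda^{\kappa n_*+\ell m_*+jgm_*n_*})$ under $(a,b)\mapsto(\lambda^{n_*}a,\lambda^{m_*}b)$. The idea is therefore to shift $a$ by $u^{n_*}v$ and $b$ by $u^{m_*}w$, and then pull out the factor $u$.

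\textbf{Shifts and averaging.} Choose parameters $U,V,W$. For $u\le U$, $|v|\le V$, $|w|\le W$, write
\[
S=\sum_{(a,b)}\chi\bigl(\Phi(a+u^{n_*}v,\,b+u^{m_*}w)\bigr)+O(AV U^{n_*}\cdot\text{(boundary)}),
\]
where $\Phi(a,b)=a^\kappa b^\ell(sa^m+tb^n)^j$. To keep the boundary term at most $S$ times a small error, take $U^{n_*}V\le A p^{-\varepsilon}$ and $U^{m_*}W\le Bp^{-\varepsilon}$. Average over $u,v,w$, write $a\equiv u^{n_*}x$ and $b\equiv u^{m_*}y\pmod p$, and use twisted homogeneity. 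This gives
\[
|S|\le \frac{1}{UVW}\sum_{x,y}\nu(x,y)\Bigl|\sum_{|v|\le V,|w|\le W}\chi\bigl(\Phi(x+v,y+w)\bigr)\Bigr|,
\]
where $\nu(x,y)$ counts the representations $(a,b,u)$ with $a\equiv u^{n_*}x$ and $b\equiv u^{m_*}y$.

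\textbf{Hölder and moment bounds.} Apply Hölder with exponent $2r$. This requires two inputs.
\begin{enumerate}[(i)]
\item The moments $\sum\nu\le ABU$ and $\sum\nu^2$. The second is the number of solutions of $u_1^{n_*}a_2\equiv u_2^{n_*}a_1$, $u_1^{m_*}b_2\equiv u_2^{m_*}b_1$. Since $\gcd(m_*,n_*)=1$, the ratio $u_1/u_2$ is essentially determined modulo $p$ by these two congruences. A divisor-type count as in Burgess then gives $\sum\nu^2\ll (ABU)p^{o(1)}(1+\cdots)$ under the conditions $U^{n_*}V\le A$ etc., with error terms of shape $A^2U^{2}/p$.
\item The complete $2r$-th moment
\[
\sum_{x,y\in\F_p}\Bigl|\sum_{v,w}\chi(\Phi(x+v,y+w))\Bigr|^{2r}.
\]
Expand this into complete sums over $(x,y)$ of $\chi$ of products of shifted $\Phi$. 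Weil/Deligne bounds (or stratification) give $p$ for tuples that are not degenerate. Degenerate tuples, where the shifts pair up, contribute $\ll (VW)^r p^2$. Altogether the moment is $\ll (VW)^rp^2+(VW)^{2r}p$.
\end{enumerate}
It is probably simpler to shift in one variable at a time: keep $b$ fixed and apply the one-variable Burgess argument to $a\mapsto\Phi(a,b)$, with shifts $u^{n_*}v$ on $a$ and $u^{m_*}w$ on $b$ but $w$ trivial. This leads to the two separate terms of the stated bound: the $A$-term carries the exponent $1/n_*$ from $U^{n_*}\approx A p^{-1/(2(r-1))}$, and the symmetric choice gives the $B$-term with $1/m_*$.

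\textbf{Optimisation and main obstacle.} Combining the pieces gives $|S|^{2r}\ll (AB)^{2r-1}p^{1+o(1)}U^{-1}$ up to secondary terms. The two symmetric choices of $U$ produce the factors $(A^{-1}p^{1/(2(r-1))})^{1/n_*}$ and $(B^{-1}p^{1/(2(r-1))})^{1/m_*}$; taking $2r$-th roots then gives the claimed bound. I expect the main obstacle to be the complete moment (ii), specifically classifying the degenerate tuples. One must show that $\prod_i\Phi(x+v_i,y)\prod_i\Phi(x+v_i',y)^{h-1}$ is not an $h$-th power in $\overline{\F}_p(x)$ unless the $v_i$ pair up. Here the factor $(sx^m+ty^n)^j$ with $1\le j<h$, together with the separable roots of $sx^m+ty^n$ for generic $y$, should yield a root of non-divisible multiplicity. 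The multiplicity count $\sum\nu^2$ is the second delicate point.
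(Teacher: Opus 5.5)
Your architecture is the paper's: shifts $(a,b)\mapsto(a+u^{n_*}v,\,b+u^{m_*}w)$, removal of $u$ by twisted homogeneity, the weights $\nu(x,y)$, and H\"older with exponent $2r$ (your $u$ is the paper's $w$). The genuine gap is step (ii), and this is exactly where the exponent $\frac{1}{2(r-1)}$ of the theorem comes from. Your dichotomy ``paired tuples give $p^2$, all other tuples give $O(p)$'' is false.

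\emph{The degenerate case.} When $(\kappa,\ell,m,n)=(0,0,1,1)$, which the hypotheses allow, $\Phi=(sx+ty)^j$ depends only on $sx+ty$. So each complete sum equals $p\sum_{z}\chi\bigl(\prod_i(z+c_i)^j(z+c_i')^{-j}\bigr)$ with $c_i=sv_i+tw_i$, which for typical shifts has size $p^{3/2}$. Bounding the moment tuple by tuple then gives only $(VW)^{2r}p^{3/2}$. The paper removes this case (its translation stabilizer is infinite) and treats it directly by Burgess's bound for $\chi^j(sa+tb)$.

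\emph{Trivial stabilizer.} Unpaired tuples can still give $p^{3/2}$. For $\Phi=(x^2-y^2)^j$, suppose the values $v_i-w_i$ pair up between numerator and denominator but the $v_i+w_i$ do not; then the product is a function of $x+y$ alone.

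\emph{Your proposed check.} Showing that $\prod_i\Phi(x+v_i,y)/\Phi(x+v_{r+i},y)$ is not an $h$-th power in $x$ for generic $y$ only yields Weil cancellation in $x$ for each fixed $y$, i.e.\ $p^{3/2}$ in total. For the same reason your ``one variable at a time'' variant cannot reach $|S|^{2r}\ll(AB)^{2r-1}p^{1+o(1)}U^{-1}$.

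What is needed is Xu's stratification (Lemma~\ref{lm:Xu} with $n=2$ and $2r$ factors). The paper first checks that $\Phi$ is $h$-th power free and that $\Phi(X+a,Y+b)\equiv\Phi(X,Y)$ forces $a=b=0$ outside $(0,0,1,1)$. The stratification then gives $O(p)$ off a subscheme $\cX_1$ of codimension at least $r-1$, and $O(p^{3/2})$ off $\cX_2$ of codimension at least $2r$. Lemma~\ref{lm:Xu-counting} then produces a middle term $p^{3/2}(VW)^{2r}\min\{V,W\}^{1-r}$ (in your notation). Balancing it against $p(VW)^{2r}$ forces $\min\{V,W\}\asymp p^{1/(2(r-1))}$. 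Choosing $U$ separately in the cases $V\ge W$ and $V<W$ then gives the factors $(A^{-1}p^{1/(2(r-1))})^{1/n_*}$ and $(B^{-1}p^{1/(2(r-1))})^{1/m_*}$. From your own moment bound $(VW)^rp^2+(VW)^{2r}p$ one would instead only impose $VW\ge p^{1/r}$. So your optimisation step does not follow from your ingredients; the exponent is imported from the statement.

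A smaller point concerns $\sum\nu^2$. Knowing $u_1/u_2$ from $(a_1,a_2,b_1,b_2)$ only gives $A^2B^2U$. The paper bounds $(b_1,b_2)$ trivially by $O(B)$ and counts $a_1u_2^{n_*}\equiv a_2u_1^{n_*}$ via the Ayyad--Cochrane--Zheng fourth moment (Lemma~\ref{lm:ThXY}). That count needs $AU\le p$ and $U\ll p^{5/8}$, which is why the paper first reduces to $A,B\le p^{\frac12+\frac{r-2}{2(r-1)(2r-3)}}$ by Weil plus completion.
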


To understand the strength of Theorem~\ref{thm:doublecharactersum}, we consider the most interesting 
diagonal case $A = B$. 

\begin{corollary}\label{cor:trinom A=B}
Under the conditions of  Theorem~\ref{thm:doublecharactersum}, and $m\leqslant n$, 
let $r\geqslant3$ be any positive integer such that  $\cA$ and $\cB$  in~\eqref{eq:AB-intervals} satisfy
\[
p^{\frac{1}{2(r-1)}}\leqslant A=B<p/2.
\]
Then we have
\[
|S_{\fs,\chi} (\cA, \cB)| \le 
A^{2-\frac{1}{r}-\frac{1}{2rn_*}} p^{\frac{1}{2r}+\frac{1}{4r(r-1)n_*}+o(1)}, 
\] 
where $n_*=n/\gcd(m,n)$.
\end{corollary}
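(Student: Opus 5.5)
This is a direct specialisation of Theorem~\ref{thm:doublecharactersum} to $A=B$, so the plan is to substitute and compare the two terms inside the bracket. The hypotheses of Theorem~\ref{thm:doublecharactersum} are met: $A=B$ lies in $[p^{\frac{1}{2(r-1)}},p/2)$ and $r\geqslant3$. Putting $A=B$ there gives
\[
|S_{\fs,\chi} (\cA, \cB)| \le A^{2-\frac{1}{r}} p^{\frac{1}{2r}+o(1)} \(\(A^{-1}p^{\frac{1}{2(r-1)}}\)^{\frac{1}{n_*}}+\(A^{-1}p^{\frac{1}{2(r-1)}}\)^{\frac{1}{m_*}}\)^{\frac{1}{2r}} .
\]

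Write $x=A^{-1}p^{\frac{1}{2(r-1)}}$. By the assumption $A\geqslant p^{\frac{1}{2(r-1)}}$ we have $0<x\leqslant1$. The assumption $m\leqslant n$ gives $m_*\leqslant n_*$, since both are obtained by dividing by the same $\gcd(m,n)$. Hence $1/m_*\geqslant 1/n_*$, and because $x\leqslant 1$ it follows that $x^{1/m_*}\leqslant x^{1/n_*}$. The bracket is therefore at most $2x^{1/n_*}$, and after raising to the power $\frac{1}{2r}$ the factor $2^{1/(2r)}$ is absorbed into $p^{o(1)}$.

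It remains to expand
\[
x^{\frac{1}{2rn_*}}=A^{-\frac{1}{2rn_*}}p^{\frac{1}{4r(r-1)n_*}}
\]
and multiply by $A^{2-\frac1r}p^{\frac{1}{2r}+o(1)}$, which yields exactly the stated bound. The only point needing care is the monotonicity step: it is the condition $x\leqslant1$ that makes the term with the larger exponent $1/m_*$ the smaller one. There is no real obstacle.
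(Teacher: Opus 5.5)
Your proposal is correct and is essentially the paper's own route: the paper gives no separate argument and obtains the corollary by setting $A=B$ in Theorem~\ref{thm:doublecharactersum}. Your extra observation is exactly what is needed to make that substitution work: since $A\geqslant p^{\frac{1}{2(r-1)}}$ and $m_*\leqslant n_*$, the $n_*$-term dominates the bracket.
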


Corollary~\ref{cor:trinom A=B} implies that for  a sufficiently large $r$, we have a non-trivial bound with a power saving 
for $S_{\fs,\chi} (\cA, \cB;p)$
provided 
\[
A=B \ge p^{\frac{n_*}{2n_*+1}+\varepsilon}
\]
for some fixed $\varepsilon > 0$
assuming that  $n \ge m$.

With the aid of explicit expressions of discriminants of trinomials (see Lemma~\ref{lm:trinomial-discriminant}), we may immediately translate Theorem~\ref{thm:doublecharactersum} into the estimate for $\fS_{\chi} (\cT)$ with $\cT$ given by~\eqref{eq:F2}.

\begin{corollary}\label{coro:disccharactersum-trinomial} 
Let $h\geqslant2$ be a fixed integer.
Let $\chi$ be a Dirichlet character modulo $p$ of order $h$. 
For $d>k\geqslant1$ with $e=\gcd(d,k)$, and $\cA$ and $\cB$ as in~\eqref{eq:AB-intervals}, put
\[
\cT=\{X\mapsto X^d+aX^k+b:~(a,b)\in\cA \times \cB\}.
\]
For any $\varepsilon > 0$ there exists some constant $\delta>0$, depending only on $d$ and $\varepsilon $ such that 
\[
\fS_{\chi} (\cT) \ll AB p^{-\delta}
\]
holds under either of the following hypotheses:
\begin{itemize}
\item $h\nmid e$, $\min\{AB^{1+\frac{e}{d}},~A^{1+\frac{e}{d-k}} B\} \ge p^{1+\varepsilon}$, $\max\{A,B\}<p/2$, 
\item $h\mid e$, $B>p^{\frac{1}{4}+\varepsilon}$, $\max\{A,B\}<p/2$, 
\end{itemize}
where the implied constant depends only on $d$, $h$ and $\varepsilon$.
\end{corollary}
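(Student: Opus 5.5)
The plan is to insert the explicit discriminant of a trinomial (Lemma~\ref{lm:trinomial-discriminant}) into $\fS_\chi(\cT)$, which turns it into a double sum $S_{\fs,\chi}(\cA,\cB)$. We then treat the two cases differently. For $f=X^d+aX^k+b$ with $d_1=d/e$ and $k_1=k/e$, the classical formula gives
\[
\Disc(f)=\pm\, b^{k-1}\bigl(d^{d_1}b^{d_1-k_1}+(-1)^{d_1+1}(d-k)^{d_1-k_1}k^{k_1}a^{d_1}\bigr)^{e}.
\]
It has the shape $b^{\ell}(sa^m+tb^n)^j$ with $\kappa=0$, $\ell=k-1$, $m=d_1$, $n=d_1-k_1$ and $j=e$, where $s,t$ are fixed nonzero constants. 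Since $p$ is large, $p\nmid st$. As $\chi$ has order $h$, we reduce $\ell$ and $j$ modulo $h$; this is harmless except when $b$ or $sa^m+tb^n$ vanishes. Those exceptional pairs number $O(A+B)$, which is admissible. Then $\fS_\chi(\cT)=\pm S_{\fs,\chi}(\cA,\cB)+O(A+B)$. Note that $\gcd(m,n)=\gcd(d_1,d_1-k_1)=1$, so $m_*=d/e$ and $n_*=(d-k)/e$.

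\textbf{Case $h\mid e$.} Here $\chi^{e}$ is trivial, so the factor $(sa^m+tb^n)^e$ contributes $1$ whenever $sa^m+tb^n\neq0$, and the zeros again cost only $O(A+B)$. The sum becomes
\[
\sum_{a\in\cA}\sum_{b\in\cB}\chi(b)^{k-1}+O(A+B).
\]
If $\chi^{k-1}$ is nontrivial, the Burgess bound~\eqref{eq:Burgess} applied to the $b$-sum with $B>p^{1/4+\varepsilon}$ and $r$ large saves a power of $p$. If $\chi^{k-1}$ is trivial, the sum equals $AB$. We must rule this out: $k-1\equiv0\bmod h$ together with $h\mid e\mid k$ would force $h\mid1$, which is impossible since $h\geqslant2$.

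\textbf{Case $h\nmid e$.} Now $j\equiv e\bmod h$ satisfies $1\leqslant j<h$, so Theorem~\ref{thm:doublecharactersum} applies. Its bound is
\[
AB\cdot\Bigl(\frac{p}{AB}\Bigr)^{\frac{1}{2r}}\Bigl((A^{-1}p^{\frac{1}{2(r-1)}})^{\frac{e}{d-k}}+(B^{-1}p^{\frac{1}{2(r-1)}})^{\frac{e}{d}}\Bigr)^{\frac{1}{2r}}p^{o(1)}.
\]
The first term inside the large parentheses gives a saving exactly when $A^{1+e/(d-k)}B\geqslant p^{1+\varepsilon'}$. The second gives a saving exactly when $AB^{1+e/d}\geqslant p^{1+\varepsilon'}$. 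We choose $r$ large in terms of $\varepsilon,d$ so that the factors $p^{O(1/r)}$ coming from the $p^{1/(2(r-1))}$ shifts are absorbed by $\varepsilon$. The hypothesis $A,B\geqslant p^{1/(2(r-1))}$ follows from the assumed inequalities, because $A,B<p/2$ forces each of $A,B$ to be at least a small power of $p$. This yields $\fS_\chi(\cT)\ll ABp^{-\delta}$ with $\delta\asymp\varepsilon/r$.

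\textbf{Main obstacle.} I expect the bookkeeping of the discriminant formula to be the delicate step. One must check the exact exponents and that the roles of $(a,b)$ match $(m,n)$, so that $m_*$ and $n_*$ pair with $A$ and $B$ as in the stated conditions. If the pairing comes out swapped, one interchanges the labels in Theorem~\ref{thm:doublecharactersum}. One must also check that reducing the exponents modulo $h$ keeps $j$ in the required range.
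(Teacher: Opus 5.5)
Your proposal is correct and follows essentially the paper's route: insert the trinomial discriminant of Lemma~\ref{lm:trinomial-discriminant}, apply Burgess's bound to $\sum_{b}\chi^{k-1}(b)$ when $h\mid e$, and otherwise apply Theorem~\ref{thm:doublecharactersum} with $\kappa=0$, $\ell\equiv k-1$ and $j\equiv e$ modulo $h$, $m=d/e$, $n=(d-k)/e$. Your check that $h\nmid k-1$ is a step the paper leaves unjustified, and your pairing of $A$ with $e/(d-k)$ and $B$ with $e/d$ is the correct one.

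One small tightening is needed. When $h\mid e$ there is no lower bound on $A$, so an error term $O(A+B)$ would not be admissible for very small $A$. However, the pairs with $sa^m+tb^n\equiv 0$ modulo $p$ number at most $nA$: for each fixed $a$ there are at most $n$ residues of $b$, and each is hit at most once since $B<p$. Since $B>p^{1/4}$, this gives $nA\ll ABp^{-\delta}$, which suffices.
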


The proof of Corollary~\ref{coro:disccharactersum-trinomial}, derived from Theorem~\ref{thm:doublecharactersum}, is given in Section~\ref{sec:proofofCorollary-disccharactersum-trinomial}.
To infer Theorem~\ref{thm:Mobius-trinomial} from Corollary~\ref{coro:disccharactersum-trinomial}, we take $\chi$ to be a quadratic character of order $h=2$.

\subsection{Comments}

Focusing on short character sums of inhomogeneous multivariate polynomials, the shifts {\it {\`a} la carte} introduced in this paper open a window for non-trivial bounds beyond the P\'olya--Vinogradov range in very general frameworks of inhomogeneous multivariate polynomials. The difficulties would fall into checking the finite translation stabilizer condition required in the 
stratification result of Xu~\cite[Theorem~1.1]{Xu20} (see also Lemma~\ref{lm:Xu}).

We emphasise that all the estimates in Theorems~\ref{thm:disccharactersum-monic}  and~\ref{thm:doublecharactersum} (and thus in their corollaries) depend on the order $h$ of $\chi$. This stems 
from the stratification result of Xu~\cite[Theorem~1.1]{Xu20}, which is 
at the heart of the argument. The argument itself does not appeal to the order of $\chi$. 
Note also that in our motivating application to sums of the geometric M{\"o}bius function, only quadratic characters are relevant. 

It is certainly natural to talk about the distribution of irreducible polynomials in  $\cG$ and $\cT$, since one usually links, over integers, the sum of M{\"o}bius values and counting primes. But this is not always the case. 
Our opinion is that the structures (or symmetries) in the target sets are quite crucial: Write $n=a_k73^k+\cdots +a_1 73+a_0$, and the vector $(a_k,\cdots,a_1,a_0)$ is then uniquely determined under the assumption $0\leqslant a_j<73$ and $a_k\neq0$. One may consider the average of the M\"obius function over integers  with 
restricted set of digits of $n$ in base 73. While the method of Maynard~\cite{May19} is likely to 
be capable of estimating such sums, it is not directly linked to the distribution of primes among such integers.
The discrepancy between the admissible ranges in our bounds on sums of the M\"obius function
and admissible ranges of~\cite{KR21,Sh91} on counting irreducible polynomials (see below) also confirms that these 
problems may become very different in some settings.
This also illustrates the importance of the algebraic structures in $\cG$ and $\cT$  when designing the shifts {\it {\`a} la carte}. Similar shifts also appeared in our recent works~\cite{FSX25,XZ25}.

We recall that in the aforementioned problem on the distribution of irreducible polynomials, Shparlinski \cite[Theorem~5]{Sh91} counted irreducible polynomials in $\cG$ as in~\eqref{eq:F1} with boxes of side length exceeding $p^{1-1/d}$, while for trinomials (as in~\eqref{eq:F2} with $k=1$)
another result of Shparlinski~\cite[Theorem~1]{Sh11} applies only to boxes of side length exceeding $\sqrt{p}$.
Very skewed boxes have been considered by Kurlberg and   Rosenzweig~\cite[Section~1.2]{KR21}, but one side 
still has length exceeding $\sqrt{p}$.

\subsection{Notation and conventions} 
We use the Landau symbol $O$ and the Vinogradov symbol $\ll$. Recall that the
assertions $U=O(V)$ and $U \ll V$  are both equivalent to the inequality $|U|\le cV$ 
with some constant $c>0$, which throughout this work may depend on the integer parameters 
$d$, $h$, $\kappa$, $\ell$, $m$, $n$ and $r$ and on the real parameter $\gamma$. 
We also write $U \asymp V$ to indicate $U \ll V \ll U$. 
For real $u$ and  $U \ge 1$ we write $u \sim U$ to denote $U \le u \le 2U$.

The floor and ceiling functions are defined as follows: $\lfloor x\rfloor$ is the greatest integer less than or equal to $x$, and $\lceil x\rceil$ the least integer greater than or equal to $x$. For a finite set $\cS$ we use $\sharp \cS$ 
to denote its cardinality. 

For each prime power $q$, denote by $\F_q$ the finite field of $q$ elements, and by ${\overline \F_q}$ the algebraic closure of $\F_q$. Denote by $\F_q[X]$ the set of all polynomials over $\F_q$.
The letter $p$ always denotes an odd prime. Denote by $\Xi_p$ and $\Xi_p^*$
the sets of all Dirichlet characters, and non-trivial characters modulo $p$, respectively. 
For a multiplicative character $\chi$ of the multiplicative group $\F_p^\times$, we define $\chi(0)=0$, so that $\chi$ can be regarded as an element in $\Xi_p$; we also make the convention $\chi(f(X)/g(X))=\chi(f(X))\overline{\chi(g(X))}$ for $f,g\in\F_p[X]$.

\subsection{AI tools}
The ideas and arguments are due solely to the authors. ChatGPT was used on October 15-18, 2026 to polish the presentation and check for errors, and the authors are responsible for any mathematical issues in this paper.

\section{Preliminaries} 

\subsection{Discriminants}
The following lemma is fundamental in our arguments, passing from geometric M\"obius function to quadratic character over discriminants. This is due to Stickelberger~\cite{St98}; see also~\cite{Ca53,Da55}.

\begin{lemma}\label{lm:Stickelberger}
Let $p$ be an odd prime and $f$ a monic polynomial of degree $d$ over $\F_p$. Then
\[
\mu_p(f)= (-1)^d \(\frac{\Disc(f)}{p}\),
\]
where $(\frac{\cdot}{p})$ denotes the Legendre symbol modulo $p$.
\end{lemma}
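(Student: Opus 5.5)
The plan is to prove Stickelberger's identity directly from the Frobenius action on the roots of $f$ in $\overline{\F}_p$.

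\emph{Multiple roots.} If $f$ has a multiple root in $\overline{\F}_p$, then $\Disc(f)=0$ by the product formula~\eqref{eq:discriminant-definition}. Both sides then vanish, so we may assume $f$ is squarefree. Let its roots $r_1,\dots,r_d$ be distinct.

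\emph{Discriminant as a square.} Put
\[
\delta=\prod_{i<j}(r_i-r_j)\in\overline{\F}_p^\times ,
\]
so that $\delta^2=\Disc(f)\in\F_p^\times$. Hence $\Disc(f)$ is a square in $\F_p$ if and only if $\delta\in\F_p$, that is, if and only if $\delta^p=\delta$. The key step is to compute $\delta^p$. Since $\delta^p=\prod_{i<j}(r_i^p-r_j^p)$ and the Frobenius $\sigma: x\mapsto x^p$ permutes the roots, we get
\[
\delta^p=\operatorname{sgn}(\sigma)\,\delta,
\]
where $\sigma$ is viewed as a permutation of $\{r_1,\dots,r_d\}$. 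This is the standard fact that the Vandermonde product changes by the sign of the permutation applied to the roots. Since $\delta\neq0$ and $p$ is odd, $\delta\neq-\delta$. Therefore
\[
\left(\frac{\Disc(f)}{p}\right)=\operatorname{sgn}(\sigma),
\]
which one can also see from Euler's criterion: $\Disc(f)^{(p-1)/2}=\delta^{p-1}=\operatorname{sgn}(\sigma)$.

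\emph{Sign of Frobenius.} Factor $f=f_1\cdots f_s$ into distinct monic irreducibles of degrees $d_1,\dots,d_s$. On the roots of $f_i$, Frobenius acts as a single $d_i$-cycle, because those roots are $\alpha,\alpha^p,\dots,\alpha^{p^{d_i-1}}$ and they are distinct. Thus
\[
\operatorname{sgn}(\sigma)=\prod_{i=1}^s(-1)^{d_i-1}=(-1)^{d-s}.
\]
Consequently $\left(\frac{\Disc(f)}{p}\right)=(-1)^d(-1)^s=(-1)^d\mu_p(f)$, which gives the claim after multiplying through by $(-1)^d$.

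The main point needing care is the sign computation $\delta^p=\operatorname{sgn}(\sigma)\delta$. To justify it, note that reordering each factor $(r_{\sigma(i)}-r_{\sigma(j)})$ to the form $r_a-r_b$ with $a<b$ introduces one factor $-1$ per inversion of $\sigma$. Everything else is routine.
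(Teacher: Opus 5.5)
Your proof is correct and complete. The paper itself does not prove this lemma: it quotes it as a classical result of Stickelberger and cites the literature, so there is no in-paper argument to compare with.

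What you give is the standard self-contained proof:
\begin{itemize}
\item reduce to squarefree $f$;
\item write $\Disc(f)=\delta^2$ with $\delta=\prod_{i<j}(r_i-r_j)$;
\item compute $\delta^p=\operatorname{sgn}(\sigma)\,\delta$ from the action of Frobenius on the Vandermonde product;
\item read off $\operatorname{sgn}(\sigma)=(-1)^{d-s}$ from the fact that Frobenius acts on the roots of each irreducible factor as a single cycle.
\end{itemize}
Each step checks out. That includes the one place where oddness of $p$ matters: it guarantees $\delta\neq-\delta$, equivalently $1\neq-1$ in $\F_p$. This is what lets you pass from the congruence in Euler's criterion to the integer identity $\bigl(\frac{\Disc(f)}{p}\bigr)=\operatorname{sgn}(\sigma)$.

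Your argument makes explicit that the lemma says the Legendre symbol of the discriminant is the sign of Frobenius as a permutation of the roots. The citation only buys brevity. That is reasonable here, since the lemma is used purely as a black box to turn sums of $\mu_p$ into quadratic character sums over discriminants.
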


We now begin with the explicit formula for the discriminants of trinomials
which is well known and,  for instance, can be found in~\cite[Example~1.3.11]{Pr04}.

\begin{lemma}\label{lm:trinomial-discriminant}
For a polynomial $f(X)=X^d+aX^k+b$ with integers $d > k \ge 1$, we have
\[
\Disc(f)
=(-1)^{\frac{d(d-1)}{2}} b^{k-1}\left\{d^{\frac{d}{e}} b^{\frac{d-k}{e}}  -  (-1)^{\frac{d}{e}} (d-k)^{\frac{d-k}{e}} k^{\frac{k}{e}} a^{\frac{d}{e}}\right\}^e,
\]
where $e=\gcd(d,k)$.
\end{lemma}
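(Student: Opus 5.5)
The formula can be derived from the resultant description of the discriminant. For monic $f$ of degree $d$ we have
\[
\Disc(f)=(-1)^{d(d-1)/2}\prod_{i}f'(r_i),
\]
where $r_1,\dots,r_d$ are the roots of $f$. This follows directly from~\eqref{eq:discriminant-definition}, since $f'(r_i)=\prod_{j\neq i}(r_i-r_j)$. So the task reduces to computing the norm $\prod_i f'(r_i)$ for $f=X^d+aX^k+b$.

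We have $f'(X)=dX^{d-1}+kaX^{k-1}=X^{k-1}(dX^{d-k}+ka)$. First, $\prod_i r_i^{k-1}=((-1)^d b)^{k-1}$. Next, at a root we have $r_i^d=-ar_i^k-b$, so
\[
r_i\bigl(dr_i^{d-k}+ka\bigr)=dr_i^{d-k+1}+kar_i .
\]
Rather than working with this expression, I would use the cleaner identity obtained from $r_i^k(r_i^{d-k}+a)=-b$. This gives $r_i^{d-k}=-a-b r_i^{-k}$, hence
\[
dr_i^{d-k}+ka=-(d-k)a-db\,r_i^{-k}=-\frac{(d-k)a r_i^k+db}{r_i^k}.
\]
Therefore
\[
\prod_i f'(r_i)=\prod_i r_i^{k-1}\cdot(-1)^d\prod_i r_i^{-k}\cdot\prod_i\bigl((d-k)ar_i^k+db\bigr),
\]
and the first two products combine to powers of $(-1)^d b$.

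The main obstacle is evaluating $P=\prod_i\bigl((d-k)ar_i^k+db\bigr)$. Set $g=d/e$ and $g'=k/e$. I would group the roots according to the values $y=r_i^e$. Writing $e$-th roots of unity, $f(X)=F(X^e)$ with $F(Y)=Y^{g}+aY^{g'}+b$. Then $P=\prod_{y:F(y)=0}\bigl((d-k)ay^{g'}+db\bigr)^e$, which explains the outer exponent $e$.

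Now $\gcd(g,g')=1$, so the map $y\mapsto y^{g'}$ interacts nicely with $F$. The substitution $u=y^{g'}$ together with $y^{g}=-au-b$ lets us eliminate $y$: from $y^{g}=-(au+b)$ and $y^{g'}=u$ we get $u^{g}=y^{gg'}=(-(au+b))^{g'}$. So the values $u$ are the roots of $G(U)=U^{g}-(-1)^{g'}(aU+b)^{g'}$, and one checks that the correspondence $y\leftrightarrow u$ is bijective generically. Then $\prod_u\bigl((d-k)au+db\bigr)$ is, up to a leading constant, $G$ evaluated at $u_0=-db/((d-k)a)$.

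At that point $au_0+b=-kb/(d-k)$. Hence
\[
G(u_0)=\frac{(-db)^g}{((d-k)a)^g}-(-1)^{g'}\frac{(-kb)^{g'}}{(d-k)^{g'}} .
\]
Multiplying by $((d-k)a)^g$ and simplifying, using $g-g'=(d-k)/e$ and $b^{g'}$ powers, should yield $\pm b^{g'}\bigl(d^{g}b^{(d-k)/e}-(-1)^{g}(d-k)^{(d-k)/e}k^{g'}a^{g}\bigr)$. The extra $b^{g'e}=b^k$ then cancels against the $\prod r_i^{-k}$ factor, leaving $b^{k-1}$.

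The remaining work is careful sign bookkeeping. Since both sides are polynomial identities in $a,b$, genericity assumptions such as $a\neq0$ and distinct roots are removed by continuity (Zariski density). I expect the sign tracking, rather than any conceptual step, to be the main risk. As a check I would compare with the known case $k=1$, $d=2$, where the discriminant of $X^2+aX+b$ is $a^2-4b$.
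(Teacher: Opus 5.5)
Your argument is correct. It also does more than the paper, which gives no proof of this lemma: it simply quotes the classical trinomial discriminant formula (going back to Swan) from Prasolov's book, Example~1.3.11.

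Your route is essentially the standard resultant computation behind that formula:
\begin{itemize}
\item write $\Disc(f)=(-1)^{d(d-1)/2}\prod_i f'(r_i)$;
\item use $r_i^k(r_i^{d-k}+a)=-b$ to reduce everything to $b^{-1}\prod_i\bigl((d-k)ar_i^k+db\bigr)$;
\item use the factorisation $f(X)=F(X^e)$, which produces the outer exponent $e$;
\item eliminate via $u=y^{k/e}$, which is injective on the (nonzero) roots of $F$ because $\gcd(d/e,k/e)=1$.
\end{itemize}
Your genericity reduction is legitimate. $\Disc(F)\not\equiv0$ (take $a=0$, $b\neq0$), and both sides lie in $\Z[a,b]$. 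So the identity proved over $\C$ specialises to $\F_p$, which is where the paper uses it.

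The sign you left open is in fact $+$. Since $G$ is monic of degree $g=d/e$, one has $\prod_{u}\bigl((d-k)au+db\bigr)=(-(d-k)a)^{g}G(u_0)$. Using $(-1)^{g'}(-kb)^{g'}=k^{g'}b^{g'}$, this equals exactly $b^{k/e}\{d^{d/e}b^{(d-k)/e}-(-1)^{d/e}(d-k)^{(d-k)/e}k^{k/e}a^{d/e}\}$. Raising this to the power $e$ and multiplying by $\prod_i r_i^{k-1}\cdot(-1)^d\prod_i r_i^{-k}=b^{-1}$ gives the stated formula with no stray sign. This is consistent with your check $a^2-4b$ for $d=2$, $k=1$, and also with $-4a^3-27b^2$ for $d=3$, $k=1$.
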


From the explicit formula~\eqref{eq:discriminant-definition} for the discriminant of a polynomial we immediately conclude the
following properties.
 
\begin{lemma}\label{lm:homogeneity}
Let $f$ be a polynomial of degree $n$ given by
\[
f(X)=a_n X^n +\cdots+a_1 X+a_0,\quad  a_i\in\F_q,\quad  a_n\neq0.
\]
For $\alpha\in\F_q$ with $\alpha\neq0$, considering $f(\alpha x)$ and $\alpha f(x)$ as polynomials in $x$, we have 
\begin{align*}
&\Disc_x(f(\alpha x))=\alpha^{n(n-1)}\Disc_x(f(x)),\\
&\Disc_x(\alpha f(x))=\alpha^{2n-2}\Disc_x(f(x)). 
\end{align*}
\end{lemma}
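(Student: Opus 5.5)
Both identities follow directly from the root-product formula~\eqref{eq:discriminant-definition}, once we track how the leading coefficient and the roots change. We may work over $\overline{\F}_q$, where $f$ splits as $f(X)=a_n\prod_{i=1}^n (X-r_i)$. The formula~\eqref{eq:discriminant-definition} is a polynomial identity in the coefficients, since $\prod_{i<j}(r_i-r_j)^2$ is symmetric in the roots. It therefore holds over any field, and in particular over $\F_q$.

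For the first identity, write $g(x)=f(\alpha x)$. Its leading coefficient is $a_n\alpha^n$. Since $\alpha\neq 0$, its roots are exactly $r_i/\alpha$, for $i=1,\dots,n$, so
\[
g(x)=a_n\alpha^n\prod_{i=1}^n (x-r_i/\alpha).
\]
Applying~\eqref{eq:discriminant-definition} gives
\[
\Disc(g)=(a_n\alpha^n)^{2n-2}\prod_{i<j}\Big(\frac{r_i-r_j}{\alpha}\Big)^2
=a_n^{2n-2}\alpha^{n(2n-2)}\alpha^{-n(n-1)}\prod_{i<j}(r_i-r_j)^2 .
\]
Here $\binom n2=n(n-1)/2$ squared factors contribute $\alpha^{-n(n-1)}$. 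The net power of $\alpha$ is $2n(n-1)-n(n-1)=n(n-1)$, as claimed.

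For the second identity, $\alpha f$ has the same roots $r_i$ and leading coefficient $\alpha a_n$. Hence $\Disc(\alpha f)=(\alpha a_n)^{2n-2}\prod_{i<j}(r_i-r_j)^2=\alpha^{2n-2}\Disc(f)$.

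There is no real obstacle here. The only point needing care is to make sure the formula applies over $\F_q$ and not just over $\C$, where~\eqref{eq:discriminant-definition} was stated. This is handled either by the polynomial-identity argument above or by passing to the splitting field. The case of repeated roots needs no separate treatment, since both sides then vanish.
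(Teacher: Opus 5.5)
Your proposal is correct and is essentially the paper's argument. The paper gives no separate proof; it only remarks that both identities follow immediately from the root-product formula~\eqref{eq:discriminant-definition}, and that is exactly what you carry out by tracking the leading coefficient ($a_n\alpha^n$, resp.\ $\alpha a_n$) and the roots ($r_i/\alpha$, resp.\ $r_i$), with the correct exponent count $2n(n-1)-n(n-1)=n(n-1)$.
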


\begin{lemma}\label{lm:twistedhomogeneity}
Let $f$ be a polynomial of degree $d$ given by
\[
f(X)=X^d+a_1X^{d-1}+\cdots+a_{d-1}X+a_d,\quad a_i\in\F_q.
\]
Considering $\Disc(f)= \Disc(a_1, \ldots, a_d)$ as a polynomial in $a_1,\cdots,a_d$, we have 
\[
\Disc(a_1+u_1 v, \ldots, a_d+u_d v^d) = v^{d(d-1)}
\Disc(a_1/v+u_1,\ldots,a_d/v^d+u_d) 
\]
for any $u_1,\cdots,u_d,v\in\F_q$ and $v\neq0$.
\end{lemma}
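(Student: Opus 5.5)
The identity in Lemma~\ref{lm:twistedhomogeneity} is a direct consequence of the first scaling rule in Lemma~\ref{lm:homogeneity}, applied with $\alpha=v$. Set $g(X)=X^d+(a_1+u_1v)X^{d-1}+\cdots+(a_d+u_dv^d)$, so that the left-hand side is $\Disc(g)$. Next introduce the auxiliary monic polynomial
\[
\tilde g(X)=X^d+(a_1/v+u_1)X^{d-1}+\cdots+(a_d/v^d+u_d),
\]
whose discriminant is the polynomial $\Disc(a_1/v+u_1,\ldots,a_d/v^d+u_d)$ on the right-hand side. The heart of the argument is the relation $g(vX)=v^d\,\tilde g(X)$. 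To check it, compare coefficients of $X^{d-j}$. In $g(vX)$ this coefficient is $(a_j+u_jv^j)v^{d-j}$, and in $v^d\tilde g(X)$ it is $v^d(a_j/v^j+u_j)=a_jv^{d-j}+u_jv^d$. These agree, and the leading coefficients $v^d$ also match.

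The discriminant of $g(vX)$ can now be computed in two ways. By the first identity of Lemma~\ref{lm:homogeneity} with $n=d$ and $\alpha=v$, we get $\Disc_X(g(vX))=v^{d(d-1)}\Disc(g)$. By the second identity with $\alpha=v^d$, we get $\Disc_X(v^d\tilde g(X))=v^{d(2d-2)}\Disc(\tilde g)$. Equating the two expressions gives $\Disc(g)=v^{2d(d-1)-d(d-1)}\Disc(\tilde g)=v^{d(d-1)}\Disc(\tilde g)$, which is exactly the asserted identity.

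A more direct route avoids Lemma~\ref{lm:homogeneity} entirely. If $\rho_1,\ldots,\rho_d\in\overline{\F}_q$ are the roots of $\tilde g$, then $v\rho_1,\ldots,v\rho_d$ are the roots of $g$, because $g(v\rho)=v^d\tilde g(\rho)=0$. Since both polynomials are monic, \eqref{eq:discriminant-definition} gives $\Disc(g)=\prod_{i<j}(v\rho_i-v\rho_j)^2=v^{d(d-1)}\Disc(\tilde g)$.

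The only step that needs care is this bookkeeping of leading coefficients: $\tilde g$ must be monic, $g(vX)$ has leading coefficient $v^d$, and the two scalings must be accounted for consistently. There is also a formal point. Because $\Disc$ is a universal integer polynomial in the coefficients, the root computation can be carried out over $\overline{\F}_q$ without any separability assumption. The identity holds even when the polynomials have repeated roots, since both sides then vanish simultaneously.
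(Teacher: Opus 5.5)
Your proof is correct and matches the paper's argument: the paper writes the shifted polynomial as $v^d\tilde g(X/v)$ and applies both scaling rules of Lemma~\ref{lm:homogeneity}, which is the same relation as your $g(vX)=v^d\tilde g(X)$. Your alternative root-based computation, $\Disc(g)=\prod_{i<j}(v\rho_i-v\rho_j)^2$, is also valid.
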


\begin{proof}
Write
\[
X^d+(a_1+u_1 v)X^{d-1}+\cdots+(a_d+u_d v^d)
=v^d\cdot g(X/v)
\]
with
\[
g(x)=x^d+(a_1/v+u_1)x^{d-1}+\cdots+(a_d/v^d+u_d).
\]
Then the result follows from Lemma~\ref{lm:homogeneity}.
\end{proof}

Our next  result has been established in~\cite[Lemma 3.1]{OS26} in a broader generality for polynomial discriminants 
over finite fields of odd characteristic. It immediately implies the following result for the discriminants of generic monic polynomials over $\F_q$. 

\begin{lemma}\label{lm:absolutelyirreducible}
Let $d \ge 2$ and $q$ an odd prime power.
Let $f(X)\in\F_q[X]$ be a monic polynomial of degree $d$. Then $\Disc(f)$ is an absolutely irreducible polynomial of degree $2(d-1)$ when considered as
a polynomial with the coefficients of $f$ as variables.
\end{lemma}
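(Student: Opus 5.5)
The plan is to prove absolute irreducibility geometrically over $\overline{\F}_q$, and to compute the degree through the homogeneous discriminant (one could instead specialise~\cite[Lemma~3.1]{OS26}, but I would give a self-contained argument). Write $D(a_1,\ldots,a_d)=\Disc(X^d+a_1X^{d-1}+\cdots+a_d)$. It is a polynomial with integer coefficients, and it is non-constant because for $d\geqslant2$ it vanishes at $f=X^d$ and not at $X^d-1$ (here $p\nmid q$ is odd, but $X^d-1$ could be inseparable when $p\mid d$; in that case I would use $X^d-X$ instead, which is always separable).

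\emph{Degree.} Let $\widetilde D(a_0,\ldots,a_d)$ be the discriminant of the general polynomial $a_0X^d+\cdots+a_d$. It is homogeneous of degree $2d-2$, and $D=\widetilde D(1,a_1,\ldots,a_d)$. Hence $\deg D\leqslant 2d-2$, and the homogeneous part of $D$ of degree $2d-2$ equals $\widetilde D(0,a_1,\ldots,a_d)$. The classical formula for a polynomial whose formal leading coefficient vanishes gives
\[
\widetilde D(0,a_1,\ldots,a_d)=\pm a_1^2\,\Disc(a_1X^{d-1}+\cdots+a_d).
\]
The right-hand side is a non-zero polynomial: for $d=2$ it is $a_1^2$, and in general the second factor is a non-zero discriminant. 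Therefore $\deg D=2d-2$.

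\emph{The zero set is irreducible.} Consider the morphism
\[
\phi:\A^1\times\A^{d-2}\to\A^d,\qquad (r,g)\mapsto (X-r)^2g(X),
\]
where $g$ runs over monic polynomials of degree $d-2$ and the image is recorded by its coefficients $(a_1,\ldots,a_d)$. A monic $f$ satisfies $D(f)=0$ exactly when $f$ has a repeated root in $\overline{\F}_q$, that is, exactly when $f$ lies in the image of $\phi$. So $V(D)=\phi(\A^{d-1})$ is the image of an irreducible variety, and it is therefore irreducible. Consequently $D=c\,P^k$ for some irreducible $P\in\overline{\F}_q[a_1,\ldots,a_d]$, some constant $c\neq0$ and some $k\geqslant1$.

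\emph{$D$ is squarefree, so $k=1$.} I expect this to be the main obstacle, because it is where the odd characteristic enters. Choose $f_0=(X-r)^2u(X)$ with $u$ separable, $u(r)\neq0$, and $u$ coprime to $X-r$; this is a point of $V(D)$. Look at the one-parameter family $f_t=f_0+t$ and the restriction $t\mapsto D(f_t)$. As $t\to0$ the simple roots of $u$ deform to distinct simple roots and contribute a factor that is a unit. The two roots $\rho_\pm(t)$ near $r$ satisfy $(\rho-r)^2u(r)\approx -t$, so $(\rho_+-\rho_-)^2\approx -4t/u(r)$. This has $t$-adic valuation exactly $1$ because $2\neq0$ in $\F_q$. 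To make this rigorous I would work over $\overline{\F}_q[[t^{1/2}]]$, or use the product formula~\eqref{eq:discriminant-definition} together with Hensel's lemma on the factorisation $f_t=(\text{quadratic near }(X-r)^2)\cdot(\text{lift of }u)$. Either way, $D(f_t)$ has a simple zero at $t=0$.

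If we had $k\geqslant2$, then $D=cP^k$ would vanish to order at least $k\geqslant2$ along every line through a point of $V(P)$, which contradicts the simple zero just found. Hence $k=1$, and $D$ is absolutely irreducible of degree $2(d-1)$. The remaining care is in choosing $f_0$ when $p$ is small relative to $d$; since $\overline{\F}_q$ is infinite, a suitable $u$ always exists.
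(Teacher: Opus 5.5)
Your argument is correct, but it follows a different route from the paper. The paper does not prove Lemma~\ref{lm:absolutelyirreducible} itself; it specialises \cite[Lemma~3.1]{OS26}, a more general result on polynomial discriminants over finite fields of odd characteristic. You give a self-contained proof in three steps:
\begin{enumerate}
\item the degree, from the homogeneous discriminant and the identity $\widetilde D(0,a_1,\ldots,a_d)=\pm a_1^2\Disc(a_1X^{d-1}+\cdots+a_d)$;
\item irreducibility of $V(D)$ over $\overline{\F}_q$, as the image of $(r,g)\mapsto (X-r)^2g$;
\item reducedness, that is, $D=cP$ rather than $cP^k$, using a line along which $D$ has a simple zero.
\end{enumerate}

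The last step is sound once you take your Hensel route. Write $f_0+t=Q_tU_t$ over $\overline{\F}_q[[t]]$ with $Q_t=(X-r)^2+c_1(X-r)+c_0$. Then $c_0=t/U_t(r)$ and $c_1\in t\,\overline{\F}_q[[t]]$. Hence $\Disc(Q_t)=c_1^2-4c_0$ has $t$-adic valuation exactly $1$. Meanwhile $\Disc(U_t)$ and $\mathrm{Res}(Q_t,U_t)$ reduce to $\Disc(u)\neq0$ and $u(r)^2\neq0$, so they are units.

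The citation gives the paper brevity and a more general statement. Your route is more transparent. It isolates the one place where odd characteristic matters, namely $4\neq0$; in characteristic $2$, for instance, $\Disc(X^2+a_1X+a_2)=a_1^2$ is not irreducible. It also identifies the leading form of $D$ explicitly.

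Two small slips:
\begin{itemize}
\item Since $q$ is a power of $p$, ``$p\nmid q$'' should read ``$q$ odd''.
\item $X^d-X$ is not always separable; it fails when $p\mid d-1$. It is separable when $p\mid d$, which is the only case where you use it. In any case, non-constancy of $D$ already follows from your degree computation.
\end{itemize}
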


The following result presents a shift non-invariance of the discriminant. 
 
\begin{lemma}\label{lm:shiftinvariant}
Let $d \ge 2$ and $q$ a large odd prime power. 
For any non-zero polynomial $g(X)\in  \F_q[X]$ of degree at most $d-1$, there is a monic polynomial
$f(X)\in  \F_q[X]$ of degree $d$, such that $\Disc(f) \ne \Disc(f+g)$. 
\end{lemma}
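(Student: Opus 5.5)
The plan is to argue by contradiction along a line in coefficient space, exploiting the fact that $\Disc$ is a polynomial of low degree (Lemma~\ref{lm:absolutelyirreducible} gives degree $2(d-1)$) compared with $q$. Identify monic $f$ of degree $d$ with its coefficient vector $\vec a=(a_1,\ldots,a_d)\in\F_q^d$. Let $\vec c$ be the (non-zero) coefficient vector of $g$, read in the same positions, and write $D(\vec a)=\Disc(f)$. We fix one base polynomial $f_0$ and study
\[
P(t)=D(\vec a_0+t\vec c)=\Disc(f_0+tg)\in\F_q[t],
\]
which has degree at most $2(d-1)$ in $t$.

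\textbf{Step 1 (reduction).} Suppose the lemma fails for this $g$. Then $\Disc(f+g)=\Disc(f)$ for every monic $f$ of degree $d$. Applying this to $f=f_0+kg$ gives $P(k+1)=P(k)$, so $P(k)=P(0)$ for all $k\in\F_p\subseteq\F_q$. The polynomial $P(t)-P(0)$ has degree at most $2(d-1)<p$ (since $q$ is large, $p$ is large too) but has at least $p$ roots. Hence $P$ is constant. Conversely, if $P$ is non-constant, this same argument produces some $k\in\F_p$ with $P(k+1)\ne P(k)$, and then $f=f_0+kg$ satisfies $\Disc(f)\ne\Disc(f+g)$. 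It therefore suffices to exhibit one $f_0$ for which $t\mapsto\Disc(f_0+tg)$ is non-constant.

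\textbf{Step 2 (choice of $f_0$ and growth in $t$).} Take $f_0=X^d-1$; the exact choice does not matter much. Let $e=\deg g\le d-1$, let $c_e\ne0$ be the leading coefficient of $g$, and write $F_t=X^d-1+tg(X)$. I will show that $P(t)=\Disc(F_t)$ has positive degree in $t$. To do this, work over $K=\overline{\F}_q((1/t))$ (or its algebraic closure) with the valuation $v$ normalised by $v(1/t)=1$, and use the Newton polygon of $F_t$ in $X$.
\begin{itemize}
\item The monomial $X^d$ has valuation $0$ and the monomial $tc_eX^e$ has valuation $-1$, while the lower terms of $tg$ also have valuation at most $-1$.
\item The Newton polygon therefore yields exactly $d-e$ roots of valuation $-1/(d-e)<0$, the ``large'' roots.
\item The remaining $e$ roots have valuation $\ge0$; they tend to the roots of $g$ as $t\to\infty$.
\end{itemize}
The large roots satisfy $r^{d-e}\approx -tc_e$, and since $q$ is large we have $p\nmid d-e$. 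So they are $r_i\approx\zeta^i(-tc_e)^{1/(d-e)}$ with $\zeta$ a primitive $(d-e)$-th root of unity. Any difference of two large roots, and any difference of a large root and a small root, then has valuation exactly $-1/(d-e)$. All other factors in~\eqref{eq:discriminant-definition} have valuation $\ge0$, but these factors can have positive valuation, so the total may not be negative.

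\textbf{Step 3 (the obstacle).} This is where I expect the main difficulty: the small roots may collide, for example when $g$ has a repeated root or when $g=c_eX^e$ with $e\ge2$. Then the small-root differences contribute positive valuation, and the sign of $v(P(t))$ has to be checked directly. I would handle it by computing $v(P(t))$ exactly. The contribution of the large and mixed pairs is $-\big[(d-e)(d-e-1)+2e(d-e)\big]/(d-e)=-(d+e-1)$. For the small roots, their elementary symmetric functions are those of $g(X)+(X^d-1)/t$. Hence $\prod_{\text{small}}(r_i-r_j)^2$ equals $\Disc(g+(X^d-1)/t\bmod\text{large part})$, with valuation bounded by $O(1)$ in terms of $e$ only. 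The cleanest route may simply be to perturb the base point: replace $X^d-1$ by a generic monic $f_0$. Then the leading coefficient in $t$ of $\Disc(f_0+tg)$ is a non-zero polynomial in the coefficients of $f_0$, because for $g$ squarefree it is non-zero by the valuation count above. The case of repeated roots of $g$ I would treat by choosing $f_0$ so that $f_0$ separates those roots at first order. If this becomes messy, a fallback is to use directly the absolute irreducibility in Lemma~\ref{lm:absolutelyirreducible}: a $\vec c$-translation-invariant polynomial is a polynomial in $d-1$ linear forms, and one then checks that $\Disc$ is not of this form by exhibiting the monomial $a_d^{d-1}$ (coming from $\Disc(X^d+a_d)=\pm d^d a_d^{d-1}$) together with a monomial involving $a_1$ (from $\Disc(X^d+a_1X^{d-1})$), which pins down that $\vec c$ must vanish.
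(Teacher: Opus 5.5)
There is a genuine gap: the case of $g$ with repeated roots, which you correctly single out in Step~3, is never handled, and none of your three suggested routes works as written. Step~1 is fine once $p>2(d-1)$, and Step~2 does settle squarefree $g$. The problems are as follows.

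(a) The $e$ small roots are only some of the roots of $g+(X^d-1)/t$, so their symmetric functions are not those of that polynomial. Moreover, a bound ``$O(1)$ in terms of $e$'' on the small--small valuation is not enough: you need it to be strictly less than $d+e-1$.

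(b) The generic-$f_0$ argument is a non sequitur, because $g$ is fixed. When $g$ has repeated roots, the coefficient of $t^{d+e-1}$ in $\Disc(f_0+tg)$ vanishes identically in $f_0$. The true degree is $d+s-1$, where $s$ is the number of distinct roots of $g$; for $d=3$, $g=X^2$ one gets degree $3$ with leading coefficient $-4f_0(0)$. So non-vanishing of that coefficient for squarefree $g$ says nothing about this $g$.

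(c) In the fallback, $\Disc(X^d+a_1X^{d-1})=0$ for $d\ge3$, since $X^{d-1}$ divides the polynomial. In any case, knowing that $D$ involves $a_1$ and $a_d$ does not rule out invariance under, e.g., $\vec{c}=(0,1,0,\ldots,0)$.

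Each route can be completed, but only with an input you have not supplied. For (a), count cluster by cluster. For $p$ large, a root $\beta$ of $g$ of multiplicity $m$ contributes valuation $m-1$ if $\beta^d\ne1$, and at most $m$ if $\beta^d=1$ (then $\beta$ is a root of $X^d-1+tg$ for every $t$). So the small--small factor has valuation at most $e$, and $v(P)\le-(d+e-1)+e=-(d-1)<0$.

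For (c), the missing idea is the weight grading from Lemma~\ref{lm:twistedhomogeneity}. $D$ is isobaric of weight $d(d-1)$ when $a_i$ has weight $i$, so $\sum_i c_i\partial_i D=0$ splits into $c_i\partial_i D=0$ for each $i$. You then need $D$ to involve every $a_i$; this follows from $\Disc(X^d+a_iX^{d-i}+a_d)$ and Lemma~\ref{lm:trinomial-discriminant} when $p\nmid di(d-i)$.

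The paper avoids restricting to a line altogether. It fixes $a$ with $g(a)\ne0$ and takes monic $f$ with $f\equiv-g$ modulo $(X-a)^2$, so that $\Disc(f+g)=0$. It then counts to show that most such $f$ are squarefree, whence $\Disc(f)\ne0$.

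Note also that your argument needs $p$ large, and ``$q$ large'' does not imply this (take $q=3^k$). The paper's count works for every odd $q$ that is large in terms of $d$, although only large $p$ is used later in the paper.
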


\begin{proof}
The case $d=2$ can be verified directly, and we now assume that $d \ge 3$. 

Let $g$ be a non-zero polynomial in $\F_q[X]$ of degree at most $d-1$, and put 
\[
\sA= \{a\in\F_q:~g(a)\neq0\}. 
\] 
From Lagrange's theorem it follows that $\sharp \sA\geqslant q-d+1$. 
We also introduce the following several families of polynomials in $\F_q[X]$:
\begin{itemize}
\item $\sM$ consists of all monic polynomials in $\F_q[X]$ of degree $d$;

\item $\sF_a$ consists of all $f\in \sM$  with 
$f(X) \equiv -g(X) \bmod {(X-a)^2}$, and put
\[
\sF=\bigcup_{a\in\sA}\sF_a;
\]

\item $\sH_j$ consists of all polynomials in $\sF$ which are multiples of $h(X)^2$ for some non-constant monic polynomial $h(X)\in\F_q[X]$ with $\deg h=j$, and put
\[
\sH=\bigcup_{1\leqslant j\leqslant d/2}\sH_j.
\]
\end{itemize}
Hence
\[
\sharp \sF_a= q^{d-2},
\]
and
the inclusion-exclusion principle gives
\[
\sharp \sF=q^{d-2}\sharp \sA+O(q^{d-2})\gg q^{d-1}.
\]
We would like to show that 
\begin{equation}\label{eq:cH-upperbound}
\sharp \sH=o(q^{d-1})
\end{equation}
as soon as $d\geqslant3$.
If this is the case, we then conclude, for a sufficiently large $q$, that there exists $f_0\in\sF$ outside $\sH$. 
Clearly $f_0$ is square-free, while $f_0 + g$ is not. Hence 
\[
\Disc(f_0+g) = 0 \ne \Disc(f_0).
\]

It suffices to prove~\eqref{eq:cH-upperbound} for all $d\geqslant3$.
We first observe that $\sharp \sH_j$ is bounded by the number of triples $(k,h,a)$ of monic polynomials $k(X),h(X)\in \F_q[X]$ and $a\in\sA$ such that
\begin{equation}\label{eq:polynomialcongruence}
k(X)h(X)^2\equiv -g(X)\bmod {(X-a)^2}
\end{equation}
with $\deg k=d-2j$ and $\deg h=j$. Note that, for each $a\in\sA$, the polynomial $g(X)$ is coprime to $X-a$, which 
forces both $k(X)$ and $h(X)$ to be coprime to $X-a$.
For each given $h$ of degree $j$ and $a\in\sA$, the number of $k(X)\in \F_q[X]$ satisfying~\eqref{eq:polynomialcongruence} and of degree $d-2j$ is at most $O(1+q^{d-2j-2})$. This gives
\[
\sharp \sH\leqslant \sum_{1\leqslant j\leqslant d/2}\sharp \sH_j\ll \sum_{1\leqslant j\leqslant d/2}(1+q^{d-2j-2})\cdot q^j\cdot \sharp \sA\ll q^{\lfloor d/2\rfloor+1}+q^{d-2},
\]
from which we arrive at~\eqref{eq:cH-upperbound}
if $d\geqslant5$. 

We now turn to the case $d=3,4$. Following the above arguments for $d-2j\geqslant2$,
we then have
\begin{equation}\label{eq:cH-partialupperbound}
\sum_{1\leqslant j\leqslant d/2-1}\sharp \sH_j\ll \sum_{1\leqslant j\leqslant d/2-1}q^{d-2j-2}\cdot q^j\cdot \sharp \sA\ll q^{d-2}.
\end{equation}
It remains to estimate $\sharp \sH_j$ for $d-2j\leqslant1$, that is, $(d-1)/2\leqslant j\leqslant d/2$.

Firstly, we specialise $d=3$, and the condition $(d-1)/2\leqslant j\leqslant d/2$ is equivalent to $j=1$. 
Hence~\eqref{eq:polynomialcongruence} leads us to count the number of triples $(k,h,a)\in\F_q^2\times\sA$ such that
\[
(X-a)^2\mid ((X+k)(X+h)^2+g(X)).
\]
This is equivalent to the conditions (in $\F_q$)
\[
(a+k)(a+h)^2 +g(a) =0,\quad  (a+h)^2+2 (a+h)(a+k)+g'(a)=0.
\]
We have $g(a)\neq0$ for $a\in\sA$, which implies $(a+k)(a+h)\neq0$, so that
\begin{equation}\label{eq:k-a,h}
a+k=-\frac{(a+h)^2+g'(a)}{2(a+h)},
\end{equation}
and
\begin{equation}\label{eq:a,h}
((a+h)^2+g'(a))(a+h)-2g(a) =0.
\end{equation}
For each $a\in\sA$, the equation~\eqref{eq:a,h} is a cubic equation in $h$, which allows at most 3 solutions in $h\in\F_q$. Moreover, $a,h$ can determine a unique $k\in\F_q$ in view of~\eqref{eq:k-a,h}.
Therefore,
\[
\sharp \sH=\sharp \sH_1\leqslant 3q=o(q^{d-1})
\]
for $d=3$.

Secondly, we turn to $d=4$, and the condition $(d-1)/2\leqslant j\leqslant d/2$ is equivalent to $j=2$. 
Hence~\eqref{eq:polynomialcongruence} leads us to study the equation
\[
h(X)^2\equiv -g(X)\bmod {(X-a)^2}
\]
with $\deg h=2$, as $\deg k=0$ appearing in~\eqref{eq:polynomialcongruence}. For each given $h$, there are $O(1)$ choices for $a\in\sA$ such that
\[
(X-a)^2\mid (h(X)^2+g(X)).
\]
Hence
\[
\sharp \sH_2\ll q^2.
\]
Together with the previous bound $\sharp \sH_1\ll q^2$ in~\eqref{eq:cH-partialupperbound},
we obtain that
\[
\sharp \sH\leqslant \sharp \sH_1+\sharp \sH_2\ll q^2=o(q^{d-1})
\]
for $d=4$, completing the proof.
\end{proof}

The argument of Lemma~\ref{lm:shiftinvariant} applies over every sufficiently large finite extension of $\F_p$. Therefore, for any fixed non-zero polynomial 
$g\in\overline \F_p[X]$ with $\deg g\leqslant \deg f-1$, we have $\Disc(f) \neq \Disc(f+g)$ when both are considered 
as polynomials in the coefficients of $f\in \F_p[X]$. 
Thus the geometric translation stabilizer of $\Disc(f)$ is trivial.

\subsection{Stratification for complete character sums}

We need to bound high-dimen\-sional complete multiplicative character sums over finite fields. Roughly speaking, this should be very difficult in general, although the case of dimension one has been settled completely by Weil in 1940's; see~\cite[Chapter~11]{IK04}. 
We recall a very deep stratification theorem due to Xu~\cite{Xu20}.
To this end, define the constants $\vartheta_j$ for $0 \leq j \leq n$ as
\begin{equation}\label{eq:theta_j}
\vartheta_j= 
\begin{cases}
j \bigl\lfloor\frac{r-1}{2(n-1)}\bigr\rfloor, & \text { if } 0 \leq j \leq n-2; \\ 
\left\lfloor\frac{r-1}{2}\right \rfloor, & \text { if } j= n-1; \\ 
\lceil n r / 2\rceil, & \text { if } j=n.
\end{cases}
\end{equation}
Note that for $1\leqslant j\leqslant n-2$ the above values of $\vartheta_j$ are lower bounds on the more precise values  
in the original version of Xu~\cite[Section~1]{Xu20}.

We say that a rational function $H\in \F_p(X_1,\cdots,X_k)$ is {\it $h$-th power-free}
if the multiplicities of irreducible factors in the numerator and denominator of $H$ are at most $h-1$.

We are now ready to recall the following stratification result of Xu \cite[Theorem~1.1]{Xu20}.

\begin{lemma}\label{lm:Xu}
Let $n\geqslant2,r\geqslant1$ and $d_1,\cdots,d_r\geqslant2$ be fixed integers. Let $p$ be a prime, and $\chi_1,\cdots,\chi_r$ multiplicative characters of $\F_p^\times$. For each $1 \leq i \leq r$, we assume the data:
\begin{itemize}
\item[(i)] $d_i=\operatorname{ord} \chi_i$,
\item[(ii)] $F_i \in \F_p(\bx)=\F_p(x_1, \ldots, x_n)$ is a $d_i$-th power-free rational function of degree at most $D$ such that $\{\bm\in \overline{\F}_p^n: F_i(\bx) \equiv F_i(\bx+\bm)\}$ is finite.
\end{itemize}
Then there exist $C_1,C_2,C_3\geqslant1$, depending only on $n, r, d_1,\cdots,d_r, D$, such that for all $p>C_1$, there exist subschemes $\A_{\F_p}^{n r}=\cX_0 \supset \cX_1 \supset \cX_2 \supset \cdots \supset \cX_n$, such that the sum of degrees of irreducible components of each $X_j$ is at most $C_2$, and of co-dimension $\operatorname{codim} \cX_j \geq \vartheta_j$ and
\[
\left|\sum_{\bm \in \F_p^n} \prod_{i=1}^r \chi_i(F_i(\bm+\bx^{(i)}))\right| \leq C_3p^{(n+j-1) / 2}
\]
for each $1 \leq j \leq n$ and $(\bx^{(1)}, \ldots, \bx^{(r)}) \in \mathbb{A}_{\F_p}^{n r}\backslash \cX_j$.
\end{lemma}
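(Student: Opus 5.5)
This lemma is a specialisation of Xu~\cite[Theorem~1.1]{Xu20}, so I do not plan to reprove the stratification from scratch. The plan is to check that our formulation is implied by Xu's, and to recall the structure of Xu's argument so that the reduction is transparent.

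\emph{Step 1: matching the data.} I would set up the sheaf-theoretic data in Xu's setting. To each pair $(\chi_i,F_i)$ attach the Kummer sheaf $\mathcal{L}_{\chi_i(F_i)}$ on the open subset of $\A^n$ where $F_i$ is defined and non-zero. Form the family over the parameter space $\A^{nr}$ whose fibre at $(\bx^{(1)},\ldots,\bx^{(r)})$ is $\bigotimes_i \mathcal{L}_{\chi_i(F_i(\bm+\bx^{(i)}))}$ on $\A^n_{\bm}$. The $d_i$-th power-freeness of $F_i$, together with $d_i=\ord\chi_i$, guarantees that each $\mathcal{L}_{\chi_i(F_i)}$ is geometrically non-trivial along every component of the divisor of $F_i$. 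The finiteness of the translation stabilizer $\{\bm:F_i(\bx)\equiv F_i(\bx+\bm)\}$ is exactly Xu's non-degeneracy hypothesis. The uniformity of $C_1,C_2,C_3$ in $n,r,d_i,D$ comes from Xu's (and ultimately Katz's) bounds on the complexity of the sheaves, which depend only on these discrete invariants.

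\emph{Step 2: the mechanism.} Xu's proof feeds this family into Katz's stratification machinery, in the spirit of Fouvry--Katz. Apply proper base change, Deligne's Weil~II, and the Grothendieck--Lefschetz trace formula to $R\pi_!$ of the family. Define $\cX_j$ as the locus where the fibre cohomology in degrees $\geqslant n+j$ fails to vanish, or is not pure of the expected weight. By Deligne's estimate, outside $\cX_j$ the sum is bounded by $C_3 p^{(n+j-1)/2}$. The degree bounds for the components of $\cX_j$ come from Katz's sums of Betti numbers estimates. The hard part is the codimension estimate. Here one uses the translation non-invariance: on a parameter set of large dimension, generic translates $\bx^{(i)}$ make the tensor product ``independent'', via a Goursat--Kolchin--Ribet type argument together with the analysis of the stabilizers. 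This yields Xu's lower bounds on $\operatorname{codim}\cX_j$.

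\emph{Step 3: the constants.} It remains to compare our $\vartheta_j$ in~\eqref{eq:theta_j} with Xu's. For $j=n$ Xu gives $\lceil nr/2\rceil$, and for $j=n-1$ Xu gives $\lfloor (r-1)/2\rfloor$; these are exactly our values. For $1\leqslant j\leqslant n-2$ Xu's bound is at least linear interpolation with slope $\lfloor (r-1)/(2(n-1))\rfloor$, so our $\vartheta_j$ are weaker lower bounds and the stated inclusion holds. I expect this bookkeeping to be routine.

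The genuine obstacle is the codimension bound in Step~2, which is the deep content of~\cite{Xu20}. I would simply invoke it rather than reprove it.
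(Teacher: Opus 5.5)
Your proposal is correct and matches the paper, which also gives no independent proof. The paper quotes the lemma from Xu's Theorem~1.1, and its only bookkeeping is that the stated $\vartheta_j$ coincide with Xu's for $j\in\{n-1,n\}$ and are weaker lower bounds for $1\leqslant j\leqslant n-2$, exactly as in your Step~3; your Step~2 sketch of Xu's internal mechanism is neither used nor needed there.
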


Together with applications of the above stratification theorem, we usually require to bound the number of (exceptional) integral points in a box that fall into varieties over finite fields; see~\cite[Lemma~1.7]{Xu20} for the details.

\begin{lemma}\label{lm:Xu-counting}
Let $\kappa$ be a finite field, $\cX \subset \mathbb{A}_\kappa^n$  a subscheme of codimension $\vartheta$, and  $s$ the sum of the degrees of its irreducible components. If $\left\{B_i\right\}_{i=1}^n$ are subsets of $\kappa$, the ``box" $\cB=\prod_{i=1}^n \cB_i$ is naturally a subset of $\mathbb{A}^n(\kappa)$. If $1 \leq \sharp \cB_1 \leq \sharp \cB_2 \leq \cdots \leq \sharp \cB_n<\infty$, we have
\[
\sharp \(\cX(\kappa) \cap \prod_{1\leqslant i\leqslant n} \cB_i\) \leq s \prod_{\vartheta+1\leqslant i\leqslant n} \sharp \cB_i=s\cdot \sharp\cB \cdot \prod_{1\leqslant i\leqslant \vartheta} \sharp (\cB_i)^{-1}.
\]
\end{lemma}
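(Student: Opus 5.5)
The plan is to induct on $n$, slicing by the coordinate with the smallest box $\cB_1$, and to use a refined B\'ezout inequality to keep track of degrees.

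\textbf{Reductions.} Only $\kappa$-points matter, so we may replace $\cX$ by its reduced subscheme; this does not change $\cX(\kappa)$. Write $\cX=\bigcup_i \cX^{(i)}$ as the union of its irreducible components, of degrees $s_i$ with $\sum_i s_i=s$. Each component has codimension $\vartheta_i\geqslant\vartheta$. The right-hand side $\prod_{\vartheta+1\leqslant i\leqslant n}\sharp\cB_i$ is non-increasing in $\vartheta$, because every factor is at least $1$. So it suffices to prove, for a single irreducible $\cX$ of degree $s$ and codimension exactly $\vartheta$, the bound
\[
\sharp\bigl(\cX(\kappa)\cap\cB\bigr)\leqslant s\prod_{\vartheta+1\leqslant i\leqslant n}\sharp\cB_i,
\]
and then sum over the components. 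We argue by induction on $n$. The case $\vartheta=0$ is the trivial bound $\sharp\cB$. The case $n=1$ with $\vartheta=1$ holds because a $0$-dimensional irreducible subscheme of degree $s$ has at most $s$ points.

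\textbf{Slicing by the first coordinate.} For each $b\in\cB_1$ let $L_b=\{x_1=b\}\cong\A^{n-1}_\kappa$. There are two cases.

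\emph{Case (a): $\cX\subseteq L_b$ for some $b$.} Then only this one value of $b$ contributes. Inside $L_b\cong\A^{n-1}$ the variety $\cX$ has codimension $\vartheta-1$ and degree $s$. The induction hypothesis, applied with the boxes $\cB_2\leqslant\cdots\leqslant\cB_n$ (so the first $\vartheta-1$ of them are dropped), gives the bound $s\prod_{i=\vartheta+1}^n\sharp\cB_i$.

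\emph{Case (b): $\cX\not\subseteq L_b$ for every $b$.} Since $\cX$ is irreducible, each intersection $\cX\cap L_b$ is either empty or has all components of codimension exactly $\vartheta+1$ in $\A^n$, hence codimension $\vartheta$ in $L_b$. By the refined B\'ezout inequality (Fulton), the sum of the degrees of these components is at most $\deg\cX\cdot\deg L_b=s$. Applying the induction hypothesis in $L_b$ with the boxes $\cB_2,\dots,\cB_n$ gives at most $s\prod_{i=\vartheta+2}^n\sharp\cB_i$ points for each $b$. Summing over the $\sharp\cB_1\leqslant\sharp\cB_{\vartheta+1}$ values of $b$ yields the claimed bound. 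Here we use the ordering of the box sizes, and $\vartheta+1\geqslant1$.

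The final equality in the statement is just the rewriting $\prod_{i>\vartheta}\sharp\cB_i=\sharp\cB\prod_{i\leqslant\vartheta}\sharp\cB_i^{-1}$.

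\textbf{Main obstacle.} The delicate point is the degree bookkeeping in case (b). A hyperplane section must not increase the total degree of the components, and no dimension may be lost. The first is exactly what the refined B\'ezout theorem provides, since only the components are counted and not their multiplicities. The second holds because $\cX$ is irreducible and not contained in $L_b$. One must also check that the induction is applied with the correctly ordered boxes: discarding $\cB_1$ keeps the remaining boxes in increasing order, which is why we slice along the coordinate with the smallest box.
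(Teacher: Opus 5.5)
Your proof is correct. You reduce to one irreducible component, slice along the coordinate $x_1$ that carries the smallest box, and use the refined B\'ezout inequality to keep the total degree of each slice $\cX\cap\{x_1=b\}$ at most $s$; the separate case $\cX\subseteq\{x_1=b\}$ loses one codimension but also the factor $\sharp\cB_1$, and together these give exactly the stated bound. The paper does not prove this lemma itself but imports it from Xu~\cite[Lemma~1.7]{Xu20}, and your induction on $n$ is the standard argument for a box-counting bound of this kind.
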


\subsection{Counting solutions to multiplicative congruences}
We first recall the bound  of Ayyad, Cochrane and Zheng~\cite[Theorem~2]{ACZ96}, on the fourth moments
of character sums, see also~\cite{Ke17} for 
a  more precise version of~\cite{ACZ96}.

\begin{lemma}
\label{lm:ACZ}   
For any integers $Z_0$ and $ Z\ge 1$, we have  
\[
\frac{1}{p-1}  \sum_{\chi \in \Xi_p^*}  
   \left|\sum_{z= Z_0+1}^{Z_0+Z} \chi(z)\right|^4 \le 
Z^2p^{o(1)}.
  \]
\end{lemma}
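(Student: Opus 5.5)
The plan is to follow the standard route to fourth-moment bounds: orthogonality plus completion. By orthogonality of characters,
\[
\frac{1}{p-1}\sum_{\chi\in\Xi_p}\left|\sum_{z=Z_0+1}^{Z_0+Z}\chi(z)\right|^4 = \sharp\{(z_1,z_2,z_3,z_4):~z_1z_2\equiv z_3z_4 \bmod p,~ p\nmid z_1z_2z_3z_4\},
\]
with all $z_i$ in the interval $]Z_0,Z_0+Z]$. Dropping the trivial character only decreases the left-hand side, since every term is non-negative. So it suffices to show that this multiplicative congruence has at most $Z^2p^{o(1)}$ solutions.

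First I reduce to $Z<p$. If $Z\geqslant p$, I split the interval into $\lceil Z/p\rceil$ complete residue systems plus a remainder. Each complete system gives a character sum of size $0$ for non-trivial $\chi$, so only the remainder matters, and the bound follows from the case $Z<p$. Next, shifting $Z_0$ by a multiple of $p$ does not change $\chi(z)$, so I may assume $|Z_0|<p$.

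The core step is counting $z_1z_2\equiv z_3z_4\bmod p$ with each $z_i=Z_0+x_i$ and $1\leqslant x_i\leqslant Z$. Write $\lambda \equiv z_1z_2$. The count equals $\sum_\lambda J(\lambda)^2$, where $J(\lambda)$ is the number of representations of $\lambda$ as such a product. My first idea is a lattice/geometry-of-numbers argument in the style of Ayyad--Cochrane--Zheng. Expanding gives
\[
(Z_0+x_1)(Z_0+x_2)-(Z_0+x_3)(Z_0+x_4)=Z_0(x_1+x_2-x_3-x_4)+x_1x_2-x_3x_4\equiv0 \bmod p.
\]
Set $u=x_1+x_2-x_3-x_4$ and $v=x_1x_2-x_3x_4$, so that $|u|\ll Z$, $|v|\ll Z^2$ and $Z_0u+v\equiv0\bmod p$. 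I then count the pairs $(u,v)$ lying in the lattice $\{(u,v): v\equiv -Z_0u\}$ inside the box $[-2Z,2Z]\times[-Z^2,Z^2]$. For each such pair, I count the quadruples $(x_i)$ with prescribed $u$ and $v$.

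The case $(u,v)=(0,0)$ means $x_1+x_2=x_3+x_4$ and $x_1x_2=x_3x_4$, so $\{x_1,x_2\}=\{x_3,x_4\}$, which gives $O(Z^2)$ solutions. For a fixed $(u,v)\ne(0,0)$, the number of quadruples is $Z^{1+o(1)}$. To see this, fix $x_3,x_4$; then $x_1+x_2$ and $x_1x_2$ are determined, so the pair $\{x_1,x_2\}$ has $O(1)$ options, which a priori gives $Z^2$. That is too crude, so the counting has to be done more carefully: substitute $x_1=s-x_2$ with $s=x_3+x_4+u$, rewrite the relation as a factorisation $(x_1-x_3)(x_1-x_4)=-v-u x_1'$ (or a similar form), and use the divisor bound to get $Z^{1+o(1)}$ per pair. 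Meanwhile, the number of nonzero lattice points in the box is $\ll 1+Z^3/p$ plus the contribution of short lattice vectors. If the lattice has a short vector, an argument in the style of Cochrane--Zheng reduces the congruence to integer equations $a(z_1z_2-z_3z_4)=0$ handled by the divisor bound.

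I expect this lattice-point/divisor-bound counting to be the main obstacle, since achieving $Z^2p^{o(1)}$ uniformly for all $Z<p$ requires a careful balance. Since the statement is exactly \cite[Theorem~2]{ACZ96}, which the text cites (with its refinement in~\cite{Ke17}), in the paper I would simply invoke that result rather than redo the count.
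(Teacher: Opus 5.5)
The paper gives no proof of this lemma; it quotes it directly from Ayyad, Cochrane and Zheng. Your closing decision to cite their theorem is therefore exactly the paper's route, and the proposal stands on that citation. The self-contained sketch you lead with, however, fails in part of the range.

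\textbf{Where the sketch fails.} After orthogonality you say it suffices to show $N\le Z^2p^{o(1)}$, where $N=\frac{1}{p-1}\sum_{\chi\in\Xi_p}|S_\chi|^4$ is the number of solutions of $z_1z_2\equiv z_3z_4 \bmod p$. This is false once $Z>p^{1/2+\varepsilon}$. The trivial character alone contributes $Z'^4/(p-1)$, where $Z'$ is the number of $z$ in the interval with $p\nmid z$. For example, $Z\approx p/2$ gives $N\gg p^3$, while the target is $p^{2+o(1)}$.

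Your own count shows the same thing:
\begin{itemize}
\item Each nonzero $(u,v)$ carries $Z^{1+o(1)}$ quadruples. This is true, though it is cleaner to see via $x_1=x_3+\alpha$, $x_2=x_4+u-\alpha$, which makes $v=(u-\alpha)x_3+\alpha x_4+\alpha(u-\alpha)$ linear in $(x_3,x_4)$ for each $\alpha$.
\item There are $\ll Z(1+Z^2/p)$ admissible pairs $(u,v)$, since for each $u$ the value $v$ lies in a fixed residue class inside an interval of length $2Z^2$.
\end{itemize}
Together these give $N\ll Z^{2+o(1)}(1+Z^2/p)$. So your argument proves the lemma for $Z\le p^{1/2}$, and no further.

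\textbf{What is missing for $p^{1/2}<Z<p$.} The paper genuinely needs this range: Lemma~\ref{lm:ThXY} is applied with $X\asymp A$ and $X\asymp H_t$, both of which may exceed $\sqrt p$. There the lemma is equivalent to $N\le Z'^4/(p-1)+Z^2p^{o(1)}$. That is an asymptotic formula for $N$ with the exact main term, so that the trivial character can be subtracted off exactly.

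Your method bounds each fibre by its maximum size and multiplies by the number of lattice points. Any such method loses a constant or a factor $Z^{o(1)}$ on the main term $Z^4/p$, which is fatal in this range. Getting the main term with the correct constant is the real content of the Ayyad--Cochrane--Zheng result, and nothing in your sketch supplies it.
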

Next, let 
\[
\cX=  \{X_0+1, \ldots, X_0 + X\} \mand \cY =  \{Y_0+1, \ldots, Y_0 + Y\}
\]
for some integers $X_0, X, Y_0, Y$ with $X,Y\geqslant1$. 
Given a positive integer $h$ we  bound the number $T_{h}(\cX, \cY;p)$
of solutions to the congruence
\[
x_1 y_1^h \equiv x_2 y_2^h \bmod p, \qquad x_1,x_2 \in \cX, \ y_1,y_2 \in \cY,\ p\nmid x_1x_2y_1y_2.
\]

The following bound is a modification of~\cite[Theorem~4.1]{CSZ12}, which in 
turn is based on a result of
Ayyad, Cochrane and Zheng~\cite[Theorem~2]{ACZ96}, which we have given in 
Lemma~\ref{lm:ACZ} above. 

\begin{lemma}\label{lm:ThXY}
For integers $X_0, X, Y_0, Y$ with $1\leqslant  X,Y<p$,
we have
\[
T_{h}(\cX, \cY;p)= \frac{(XY)^2}{p-1} + O(X Yp^{o(1)}+ XY^2 p^{-\frac{5}{8}+ o(1)}+X^2Yp^{-1}).
\]
\end{lemma}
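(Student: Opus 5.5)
The plan is to use orthogonality of multiplicative characters, as in \cite[Theorem~4.1]{CSZ12}, and to split the characters into three classes. Write
\[
S_\chi(\cX)=\sum_{x\in\cX}\chi(x), \qquad S_\psi(\cY)=\sum_{y\in\cY}\psi(y).
\]
Since $\chi(0)=0$, the condition $p\nmid x_1x_2y_1y_2$ is built in, and we get
\[
T_h(\cX,\cY;p)=\frac{1}{p-1}\sum_{\chi\in\Xi_p}\bigl|S_\chi(\cX)\bigr|^2\bigl|S_{\chi^h}(\cY)\bigr|^2 .
\]
The characters $\chi$ fall into three classes:
\begin{itemize}
\item the trivial character $\chi_0$;
\item the $O(h)$ nontrivial $\chi$ with $\chi^h=\chi_0$;
\item the remaining $\chi$, for which $\chi^h$ is nontrivial.
\end{itemize}

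\emph{Trivial character.} Because $X,Y<p$, each interval contains at most one multiple of $p$. Hence $S_{\chi_0}(\cX)=X+O(1)$ and $S_{\chi_0}(\cY)=Y+O(1)$. This term therefore equals
\[
\frac{(XY)^2}{p-1}+O\Bigl(\frac{X^2Y+XY^2}{p}\Bigr).
\]
The $X^2Y/p$ part is one of the stated error terms. The $XY^2/p$ part is absorbed into $XY^2p^{-5/8}$.

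\emph{Nontrivial $\chi$ with $\chi^h=\chi_0$.} Here $|S_{\chi^h}(\cY)|\le Y$. For $S_\chi(\cX)$ we use the Burgess bound~\eqref{eq:Burgess} with $r=2$, combined with the trivial bound when $X<p^{1/4}$. This gives
\[
|S_\chi(\cX)|\le X^{1/2}p^{3/16+o(1)}
\]
uniformly in $X<p$. The total contribution of these $O(h)$ characters is then
\[
\ll \frac{1}{p}\,Y^2\,X\,p^{3/8+o(1)}=XY^2p^{-5/8+o(1)}.
\]
This is exactly the middle error term, so this step dictates the exponent $-5/8$.

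\emph{Remaining $\chi$, with $\chi^h$ nontrivial.} We apply Cauchy--Schwarz to bound their contribution by
\[
\frac{1}{p-1}\Bigl(\sum_{\chi\in\Xi_p^*}|S_\chi(\cX)|^4\Bigr)^{1/2}\Bigl(\sum_{\chi:\,\chi^h\ne\chi_0}|S_{\chi^h}(\cY)|^4\Bigr)^{1/2}.
\]
The map $\chi\mapsto\chi^h$ is at most $h$-to-one, and by assumption its image avoids $\chi_0$. So the second sum is at most
\[
h\sum_{\psi\in\Xi_p^*}|S_\psi(\cY)|^4 .
\]
Lemma~\ref{lm:ACZ} bounds the two fourth moments by $(p-1)X^2p^{o(1)}$ and $(p-1)Y^2p^{o(1)}$ respectively. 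The contribution is therefore $O(XYp^{o(1)})$.

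Adding the three contributions gives the claimed asymptotic formula. The only delicate point is the second class. There the fourth-moment lemma cannot be used, because $\chi^h$ is trivial, so one must save on the $x$-sum alone. Burgess with $r=2$ gives exactly the term $XY^2p^{-5/8}$. One also has to check that the combination with the trivial bound really makes $X^{1/2}p^{3/16+o(1)}$ valid for all $X<p$. I expect this to be routine: when $X<p^{1/4}$ we have $X\le X^{1/2}p^{1/8}$, and when $X\ge p^{1/4}$ the Burgess bound with $r=2$ gives $X^{1/2}p^{3/16+o(1)}$ directly.
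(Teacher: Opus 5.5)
Your proposal is correct and follows the paper's proof essentially step by step: orthogonality, the trivial-character main term, Burgess with $r=2$ for the nontrivial $\chi$ with $\chi^h=\chi_0$ (which produces the $XY^2p^{-5/8+o(1)}$ term), and for the remaining characters Cauchy--Schwarz, the at-most-$h$-to-one map $\chi\mapsto\chi^h$, and Lemma~\ref{lm:ACZ}. Your extra check that $|S_\chi(\cX)|\le X^{1/2}p^{3/16+o(1)}$ for all $X<p$ is harmless but not needed, since the bound~\eqref{eq:Burgess} with $r=2$ is trivially true whenever $X<p^{3/8}$.
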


\begin{proof}  We essentially repeat the proof of~\cite[Theorem~4.1]{CSZ12}. 
Using the orthogonality of Dirichlet characters, we write 
\begin{align*}
T_{h}(\cX, \cY;p)
&=\sum_{x_1,x_2 \in \cX} \sum_{y_1,y_2 \in \cY}
\frac{1}{p-1}\sum_{\chi \in\Xi_p}\chi(x_1y_1^h)\overline{\chi(x_2y_2^h)}\\
&= \frac{1}{p-1}\sum_{\chi \in \Xi_p} \left|\sum_{x \in \cX}
\chi(x)\right|^2 \left|\sum_{y \in \cY} \chi^h(y)\right|^2.
\end{align*}
The contribution to the above sum from the trivial character 
$\chi=\chi_0$ is 
\[
\frac{(X+O(1))^2(Y+O(1))^2}{p-1}=\frac{(XY)^2}{p-1}+O(XY^2p^{-1}+X^2Yp^{-1}).
\]
Taking $r=2$ in the Burgess bound~\eqref{eq:Burgess}, the contribution from the characters $\chi  \in \Xi_p^*$ with $\chi^h =  \chi_0$  can be estimated  as 
\[
\frac{Xp^{\frac{3}{8}+ o(1)}}{p-1}\sum_{\substack{\chi \in \Xi_p^*\\
\chi^h =  \chi_0}}  \left|\sum_{y \in \cY} \chi^h(y)\right|^2
\ll XY^2 p^{-\frac{5}{8}+ o(1)}. 
\]
Therefore,
\begin{equation}\label{eq:TW}
T_{h}(\cX, \cY;p)
=\frac{X^2Y^2}{p-1}+ O(XY^2 p^{-\frac{5}{8}+ o(1)}+X^2Yp^{-1} + W),
\end{equation}
where
\[
W = \frac{1}{p-1}\sum_{\substack{\chi \in\Xi_p^*\\ \chi^h  \ne \chi_0}}  \left|\sum_{x \in \cX}
\chi(x)\right|^2 \left|\sum_{y \in \cY} \chi^h(y)\right|^2.
\]

Using the Cauchy inequality, we derive
\begin{equation}\label{eq:W-Cauchy}
W^2 \le \frac{1}{(p-1)^2}\sum_{\substack{\chi \in\Xi_p^*\\ \chi^h  \ne \chi_0}}  \left|\sum_{x \in \cX}
\chi(x)\right|^4 \times
\sum_{\substack{\chi \in\Xi_p^*\\ \chi^h  \ne \chi_0}}  \left|\sum_{y \in \cY} \chi^h(y)\right|^4.
\end{equation}
When $\chi$ runs through $\Xi_p$ the power $\chi^h$ represents any other
character in $\Xi_p$ no more than $h$ times.
Thus
\[
\sum_{\substack{\chi \in\Xi_p^*\\ \chi^h  \ne \chi_0}}  \left|\sum_{y \in \cY} \chi^h(y)\right|^4\le h \sum_{\chi \in\Xi_p^*} \left|\sum_{y \in \cY} \chi(y)\right|^4.
\]
Using Lemma~\ref{lm:ACZ}, and then 
recalling~\eqref{eq:W-Cauchy} we infer
\[
W \le XY p^{o(1)}.
\]
After substituting this in~\eqref{eq:TW}, we complete the proof of the lemma.
\end{proof}

\section{Proof of Theorem~\ref{thm:disccharactersum-monic}}

\subsection{Amplification via multivariate bilinear shifts}
\label{sec:Ampl} 
For the monic polynomial
\[
f(X)=X^d+a_1X^{d-1}+\cdots+a_{d-1}X+a_d\in\Z[X],
\]
we write
\[
\Disc(f)=D(a_1, \ldots, a_d)
\]
as a polynomial in the variables $a_1, \ldots, a_d$. We are going to estimate the character sum
\[
S = \sum_{(a_1, \ldots, a_d)\in\cB}\chi(D(a_1, \ldots, a_d)),
\]
where
\[
\cB=\Z^d\cap\prod_{1\leqslant j\leqslant d}]N_j,N_j+H_j]
\]
is as given in Theorem~\ref{thm:disccharactersum-monic}.
To mimic  the indicator function of $\cB$, introduce the function $w_j$, for $1\leqslant j\leqslant d$, as
\[
w_j(x)=
\begin{cases}
\min\{x-N_j,~1,~N_j+H_j+1-x\},& \text{for~}x\in[N_j,N_j+H_j+1],\\
0,&\text{otherwise}.
\end{cases}
\]
Integration by parts gives that the Fourier transform of $w_j(x)$ satisfies the bound 
\begin{equation}\label{eq:w-fourier}
\widehat{w_j}(\xi)\ll \min\{H_j,|\xi|^{-1},|\xi|^{-2}\}.
\end{equation}
This allows us to write
\[
S=\sum_{(a_1, \ldots, a_d)\in\Z^d}w_1(a_1)\cdots w_d(a_d)\chi(D(a_1, \ldots, a_d)).
\]
For all integers $u_1,\cdots,u_d,v$, we have
\[
S=\sum_{(a_1, \ldots, a_d)\in\Z^d}w_1(a_1+u_1v)\cdots w_d(a_d+u_dv^d)\chi(D(a_1+u_1v, \ldots, a_d+u_dv^d)).
\]
Summing over $u_j\sim U_j$, $1 \le j \le d$,  and $v\sim V$,  with
\begin{equation}\label{eq:UjV1}
U_j V^j = H_j, \quad  U_j\geqslant1~(1\leqslant j\leqslant d),
\end{equation}
and
\begin{equation}\label{eq:UjV2}
\min_{1\leqslant j\leqslant d}H_jV<p,\quad  V\geqslant1,
\end{equation}
we infer
\begin{align*}
S\ll \frac{1}{U_1\ldots U_d V}\sum_{(a_1, \ldots, a_d)\in\Z^d}&\sum_{v\sim V}\Bigl|\sum_{\bu \in \cU}w_1(a_1+u_1v)\cdots w_d(a_d+u_dv^d)\\
&\qquad \qquad \qquad \times\chi(D(a_1+u_1v, \ldots, a_d+u_dv^d))\Bigr|,
\end{align*}
where the innermost summation is over all integral vectors $\bu = (u_1, \ldots, u_d)$ in the box
\[
\cU = (]U_1,2U_1] \times \ldots \times  ]U_d,2U_d])\cap \Z^d.
\]
We can utilise the Fourier inversion to separate the variables (following the technique 
of~\cite{FM98} and using~\eqref{eq:w-fourier}), getting
\[
S\ll \frac{(\log p)^d}{U_1\ldots U_d V}\sum_{(a_1, \ldots, a_d,v)\in\cD}\left|\sum_{\bu \in \cU}\varepsilon(\bu)\chi(D(a_1+u_1v, \ldots, a_d+u_dv^d))\right|
\]
with
\[
\cD=[N_1-4H_1,N_1+H_1]\times\cdots\times [N_d-2^{d+1}H_d,N_d+H_d]\times ]V,2V],
\]
where $\varepsilon(\bu) \in \C$ is of absolute value $|\varepsilon(\bu)| = 1$ and is independent from the vector 
$(a_1, \ldots, a_d,v)$.
In view of Lemma~\ref{lm:twistedhomogeneity}, we further deduce that
\[
S\ll \frac{(\log p)^d}{U_1\ldots U_d V}\sum_{(a_1, \ldots, a_d,v)\in\cD}\left|\sum_{\bu \in \cU}\varepsilon(\bu)\chi(D(a_1\overline{v}+u_1, \ldots, a_d\overline{v}^d+u_d))\right|,
\]
where $\overline{v} v\equiv1\bmod p$.

Given a vector $\bx = (x_1, \ldots, x_d)\in \Z^d$ we denote by $\rho(\bx)$ the number of 
solutions to the system of congruence equations in $(a_1, \ldots, a_d,v)\in\cD\cap\Z^{d+1}:$
\[
x_jv^j\equiv a_j\bmod p,\quad 1\leqslant j\leqslant d.
\]
Therefore,
\begin{equation}\label{eq:S-T}
S\ll \frac{(\log p)^d}{U_1\ldots U_d V}\cdot T,
\end{equation}
where
\[
T=\sum_{\bx\in \F_p^d} \rho(\bx)\left|\sum_{\bu \in \cU}\varepsilon(\bu)\chi(D(x_1+u_1, \ldots, x_d+u_d))\right|.
\]

From the H\"older inequality, we obtain for any positive integer $r$ that
\begin{equation}
\label{eq:T-Holder}
T^{2r} \le \(\sum_{\bx\in \F_p^d} \rho(\bx) \)^{2r-2}
\(\sum_{\bx\in \F_p^d} \rho(\bx)^2\) \Sigma_r, 
\end{equation}
where
\[
\Sigma_r=\sum_{\bx\in \F_p^d}\left|\sum_{\bu \in \cU}\varepsilon(\bu)\chi(D(x_1+u_1, \ldots, x_d+u_d))\right|^{2r}.
\]

Clearly, 
\begin{equation}\label{eq:rho 1st mom}
\sum_{\bx\in \F_p^d} \rho(\bx)  \ll  H_1\cdots H_dV.
\end{equation}
We choose $t$ with $1\leqslant t\leqslant d$ such that $H_t=\min_j H_j$. 
As assumed in \eqref{eq:UjV2}, we have $H_tV< p$, so that
\begin{equation}\label{eq:rho 2nd mom}
\sum_{\bx\in \F_p^d} \rho(\bx)^2
\ll \prod_{\substack{1\leqslant j\leqslant d\\ j\neq t}}H_j\cdots
 \sum_{\substack{a_t,b_t\in[N_t-2^{t+1}H_t,N_t+H_t]\\ v,w\sim V\\ a_tw^t\equiv b_tv^t\bmod p}}1 \ll  H_1\cdots H_dV p^{o(1)}, 
\end{equation}
where the last inequality follows from Lemma~\ref{lm:ThXY} and Cauchy inequality.

It remains to estimate $\Sigma_r$.
Opening the power and switching summations, we get
\[
\Sigma_r
\leqslant \sum_{\bu^{(1)}, \ldots,\bu^{(2r)}\in \cU}|\Sigma_r(\bu^{(1)}, \ldots,\bu^{(2r)})|,
\]
where for $\bu^{(i)}=(u_1^{(i)},\ldots, u_d^{(i)})$ ($1\leqslant i\leqslant 2r$),
\begin{align*}
\Sigma_r(\bu^{(1)}, \ldots,\bu^{(2r)})
&=
\sum_{\bx\in \F_p^d}  \prod_{1\leqslant i\leqslant r} \chi\(\frac{D(\bx+\bu^{(i)})}{D(\bx+\bu^{(i+r)})}\)\\
&=
\sum_{\bx\in \F_p^d}  \prod_{1\leqslant i\leqslant r} \chi\(\frac{D(x_1+u_1^{(i)} ,\ldots, x_d+u_d^{(i)})}{D(x_1+u_1^{(i+r)}, \ldots, x_d+u_d^{(i+r)})}\).
\end{align*}

\subsection{Stratification of the sums  $\Sigma_r(\bu^{(1)}, \ldots,\bu^{(2r)})$} 
Lemma~\ref{lm:Xu} is applicable since Lemmas~\ref{lm:absolutelyirreducible} and~\ref{lm:shiftinvariant} verify the conditions~(i) and~(ii) of Lemma~\ref{lm:Xu}.
We also use Lemma~\ref{lm:Xu-counting} to bound the above $d$-dimensional character sum. 

More precisely, for $1\leqslant j\leqslant d$ there exist the ``stratification'' subschemes $\cX_j$'s corresponding to Lemma~\ref{lm:Xu}, such that $\cE_j\subseteq\cX_j$ with
\[
\cE_j
=\left\{(\bu^{(1)}, \ldots,\bu^{(2r)})\in\cU^{2r}:~|\Sigma_r(\bu^{(1)}, \ldots,\bu^{(2r)})|>Cp^{\frac{d+j-1}{2}}\right\}
\]
for some positive constant $C$ depending on $d$, $h$ and $r$.
It is elementary to observe that
\[
\cU^{2r}=\bigcup_{0\leqslant j\leqslant d}(\cE_j\setminus\cE_{j+1}),
\]
where we define $\cX_0=\A_{\F_p}^{2rd}$, $\cE_0=\cU^{2r}$ and $\cX_{d+1}=\cE_{d+1}=\emptyset$.

Note that, over the set $\cE_j\setminus\cE_{j+1}$, one has the bound
\[
|\Sigma_r(\bu^{(1)}, \ldots,\bu^{(2r)})|\leqslant Cp^{\frac{d+j}{2}}.
\]
Therefore,
\begin{equation}\label{eq:Wr-stratification}
\Sigma_r \ll \sum_{0\leqslant j\leqslant d}  \sharp (\cE_j\setminus\cE_{j+1})\ \cdot p^{\frac{d+j}{2}}\leqslant \sum_{0\leqslant j\leqslant d}  \sharp \cE_j \cdot p^{\frac{d+j}{2}}.
\end{equation}

Applying~\eqref{eq:theta_j} with $(n,r)\leftarrow (d,2r)$, we obtain the control of the co-dimensions of $\cX_j$: $\operatorname{codim} \cX_j \geq \widetilde \vartheta_j$ with
\[
\widetilde \vartheta_j=
\begin{cases}
j \varTheta, & \text { if } 1 \leq j \leq d-2; \\ 
r-1, & \text { if } j=d-1; \\ 
dr, & \text { if } j=d,
\end{cases}
\]
in which we recall that $\varTheta=\fl{(r-1)/(d-1)}$ as introduced in Theorem~\ref{thm:disccharactersum-monic}.

We now find all indices $j$ with $1\leqslant j\leqslant d$, which minimize $U_j$. The smallest one is defined as $\varpi$. As one can see later, this choice coincides with that in Theorem~\ref{thm:disccharactersum-monic} after fixing $V$ subject to~\eqref{eq:UjV2}.
Therefore, from Lemmas~\ref{lm:Xu} and~\ref{lm:Xu-counting}, it follows that
\begin{equation}\label{eq:Xj-upperbound1}
\sharp \cE_{d-1}\leqslant \sharp (\cX_{d-1}\cap \cU^{2r})\ll (U_1\cdots U_d)^{2r} U_\varpi^{1-r},
\end{equation}
and
\begin{equation}\label{eq:Xj-upperbound2}
\sharp \cE_j \leqslant \sharp (\cX_j\cap \cU^{2r})\ll (U_1\cdots U_d)^{2r} U_\varpi^{-j\varTheta},\qquad 0\leq j \leq d-2.
\end{equation}
We also need the following bound for $\sharp \cE_d$.
\begin{lemma}
With the above notation, we have
\begin{equation}\label{eq:Xd-upperbound}
\sharp \cE_d\ll (U_1\cdots U_d)^r.
\end{equation}
\end{lemma}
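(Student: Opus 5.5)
We bound $\sharp\cE_d$ directly, without the subscheme $\cX_d$. Its codimension $dr$ would, through Lemma~\ref{lm:Xu-counting}, only give $(U_1\cdots U_d)^{2r}U_\varpi^{-dr}$, which is weaker when the $U_j$ differ. Instead we show that every tuple in $\cE_d$ is \emph{degenerate}. Concretely, if $(\bu^{(1)},\ldots,\bu^{(2r)})\in\cE_d$, then $|\Sigma_r|>Cp^{(2d-1)/2}$ is larger than any bound available for a non-degenerate rational function. This should force the rational function
\[
G(\bx)=\prod_{1\leqslant i\leqslant r}\frac{D(\bx+\bu^{(i)})}{D(\bx+\bu^{(i+r)})}
\]
to be an $h$-th power in $\overline{\F}_p(\bx)$, up to a constant.

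The first step justifies this reduction. If $G$ is not of the form $c\,K^h$, then by Lemma~\ref{lm:absolutelyirreducible} each $D(\bx+\bu)$ is absolutely irreducible. Fixing $d-1$ coordinates and applying the Weil bound (see~\cite[Chapter~11]{IK04}) in the remaining variable, or equivalently the $j=d$ case of Lemma~\ref{lm:Xu}, gives $|\Sigma_r|\ll p^{d-1/2}$. Choosing the constant $C$ in the definition of $\cE_d$ large enough, such tuples lie outside $\cE_d$. One point needs care: after specialisation the polynomial in the last variable must stay non-$h$-th-power for almost all fixings. The exceptional fixings lie in a proper subvariety, so they contribute $O(p^{d-1})$, which is admissible.

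The second step does the combinatorics. Since the shifted discriminants $D(\bx+\bu)$ are absolutely irreducible, $G$ is an $h$-th power only if the multiset of factors matches up modulo $h$. Distinct shifts $\bu\neq\bu'$ give non-proportional polynomials $D(\bx+\bu)$ and $D(\bx+\bu')$. Indeed, both are monic of the same degree in the coefficient of $x_d$ (the constant term variable), so proportionality forces equality, and equality is excluded by Lemma~\ref{lm:shiftinvariant} and the remark after it (trivial translation stabilizer). Hence in $G$ every distinct shift must occur with total exponent divisible by $h$. For $h\geqslant 2$ this forces each vector to appear at least twice among $\bu^{(1)},\ldots,\bu^{(2r)}$, counting numerator and denominator occurrences together, since a single occurrence has exponent $\pm1\not\equiv0\bmod h$.

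The third step counts. Tuples of $2r$ vectors from $\cU$ in which every value occurs at least twice take at most $r$ distinct values. Choosing the pattern of coincidences ($O_r(1)$ choices) and then at most $r$ free vectors gives
\[
\sharp\cE_d\ll (\sharp\cU)^r\ll (U_1\cdots U_d)^r,
\]
which is~\eqref{eq:Xd-upperbound}.

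The main obstacle is the first step: showing rigorously that, uniformly in the shifts, non-$h$-th-power $G$ yields $|\Sigma_r|\leqslant Cp^{d-1/2}$ with $C$ depending only on $d,h,r$. I would first try to read this off from the $j=d$ statement of Lemma~\ref{lm:Xu}, namely $|\Sigma_r|\leqslant C_3p^{d-1/2}$ outside $\cX_d$, and then identify $\cX_d$ with the degenerate locus. If the lemma does not give that identification, I would fall back on the fibrewise Weil argument above.
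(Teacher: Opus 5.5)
Your proposal is correct and follows essentially the same route as the paper. Both arguments use a Weil-type bound $|\Sigma_r|\ll p^{d-1/2}$ whenever the quotient of shifted discriminants is not a constant times an $h$-th power, followed by the same multiplicity count based on absolute irreducibility and the trivial translation stabilizer. The paper rules out proportionality by comparing leading homogeneous parts, whereas you compare the constant leading coefficient $\pm d^d$ in $x_d$; both work.

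The step you flag as the main obstacle is settled in the paper simply by citing \cite[Lemma~1.14]{Xu20}. Your fibrewise fallback is also valid here, but only because each $D(\bx+\bu)$ has constant leading coefficient in $x_d$, and distinct shifts are coprime and separable in $x_d$. This puts the bad fibres on the zero set of a nonzero discriminant in $x_1,\ldots,x_{d-1}$, a fact you asserted without justification. For a general rational function, for instance one depending only on $x_1,\ldots,x_{d-1}$, this specialisation step would fail.
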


\begin{proof}
This does not follow directly from Lemma~\ref{lm:Xu-counting}, which is not strong enough when the edges of the box have different lengths from each other. 
We instead use the absolute irreducibility of the discriminant polynomial $D$ and the triviality of its translation stabilizer.
In fact, according to \cite[Lemma 1.14]{Xu20}, one has the Weil bound
\[
|\Sigma_r(\bu^{(1)}, \ldots,\bu^{(2r)})|\ll p^{d-\frac{1}{2}},
\]
provided the rational function
\[
R(\bx;\bu^{(1)}, \ldots,\bu^{(2r)}):=\frac{\prod_{1\leqslant i\leqslant r}D(\bx+\bu^{(i)})}{\prod_{1\leqslant i\leqslant r}D(\bx+\bu^{(i+r)})}
\]
in $\bx$ is not an $h$-th power of any other rational functions over $\F_p$, up to a constant scalar.
To bound $\sharp \cE_d$, it suffices to count the number of $\bu^{(i)}$'s such that $R(\bx;\bu^{(1)}, \ldots,\bu^{(2r)})$ is an $h$-th power.

For distinct vectors $\bv,\bw$, the polynomials $D(\bx+\bv)$ and
$D(\bx+\bw)$ are distinct, non-associate, absolutely irreducible polynomials.
If they were associates, comparison of their leading homogeneous parts would force
the proportionality constant to be 1, contradicting the absence of a nonzero
translation fixing $D$.
We now assume that $R(\bx;\bu^{(1)}, \ldots,\bu^{(2r)})$ is an $h$-th power.
For each distinct vector value $\bv$, denote by $a_{\bv}$ and $b_{\bv}$ its
multiplicities in the numerator and denominator.
Unique factorisation gives $a_{\bv}\equiv  b_{\bv}\pmod h$.
If a vector occurred only once among all $2r$ positions, this difference would be
$1$ or $-1$, which cannot be divisible by $h\ge2$. Consequently, each distinct vector
occurs at least twice, and there are at most $r$ distinct vectors.
This completes the proof of the lemma by permuting all possible appearances of these vectors in $2r$ positions.
\end{proof}

We now conclude from~\eqref{eq:Wr-stratification}, \eqref{eq:Xj-upperbound1}, \eqref{eq:Xj-upperbound2} and~\eqref{eq:Xd-upperbound} that
\begin{align*}
\Sigma_r
& \ll \sum_{0\leqslant j\leqslant d-2}(U_1\cdots U_d)^{2r} U_\varpi^{-j\varTheta}\cdot p^{\frac{d+j}{2}}\\
& \qquad \qquad \qquad \qquad  +(U_1\cdots U_d)^{2r} U_\varpi^{1-r}\cdot p^{d-\frac{1}{2}}+(U_1\cdots U_d)^r\cdot p^{d}\\
& \ll (U_1\cdots U_d)^{2r}p^{\frac{d}{2}} \fU +(U_1\cdots U_d)^r\cdot p^d,
\end{align*}
where 
\[
\fU = 1+U_\varpi^{-(d-2)\varTheta}\cdot p^{\frac{d}{2}-1}+U_\varpi^{1-r}\cdot p^{\frac{d-1}{2}}
\]
 
\subsection{Optimisation}
Take 
\[
U_\varpi=p^{\frac{1}{2R}}\mand R:=r-1-(d-2)\varTheta
\]
to balance the second and third terms in $\fU$, which then yields 
\begin{align*}
\fU & \ll 1+ p^{\frac{-(d-2)\varTheta}{2R} + \frac{d}{2}-1}
= 1+ p^{\frac{1}{2} - \frac{r-1}{2R} + \frac{d}{2}-1} \\
& = 1+ p^{ -\frac{r-1}{2R}+\frac{d-1}{2}}
\ll p^{-\frac{r-1}{2R} + \frac{d-1}{2}} 
\end{align*}
since $\varTheta\leqslant (r-1)/(d-1)$ we have 
\[
 \frac{d-1}{2} \ge \frac{r-1}{2R}.
 \] 
 Therefore,
\[
\Sigma_r
 \ll (U_1\cdots U_d)^{2r}\cdot p^{d-\frac{1}{2}-\frac{r-1}{2R}}+(U_1\cdots U_d)^r\cdot p^d,
\]
from which and also from~\eqref{eq:T-Holder}, \eqref{eq:rho 1st mom} and~\eqref{eq:rho 2nd mom}, we derive that
\[
T\le (H_1\cdots H_dV)^{1-\frac{1}{2r}}\(U_1\cdots U_d\cdot p^{\frac{d}{2r}-\frac{1}{4r}-\frac{r-1}{4rR}} +(U_1\cdots U_d)^\frac{1}{2}\cdot p^{\frac{d}{2r}}\)p^{o(1)}.
\]
From~\eqref{eq:S-T} it follows that 
\begin{equation}\label{eq:S-intermediate} 
|S|\le  (H_1\cdots H_d)^{1-\frac{1}{2r}}\(p^{\frac{d}{2r}-\frac{1}{4r}-\frac{r-1}{4rR}}+(U_1\cdots U_d)^{-\frac{1}{2}}\cdot p^{\frac{d}{2r}}\)V^{-\frac{1}{2r}} p^{o(1)}.
\end{equation}
We now take
\begin{equation}\label{eq:V}
V=\left(H_\varpi/U_\varpi\right)^{\frac{1}{\varpi}}
=(H_\varpi p^{-\frac{1}{2R}})^{\frac{1}{\varpi}}.
\end{equation}
Upon this choice, we find $V>1$ by the hypothesis~\eqref{eq:HjHvarpi}.
Together with~\eqref{eq:UjV1} this yields
\[
U_1\cdots U_d\asymp H_1\cdots H_dV^{-\frac{d(d+1)}{2}}\asymp H_1\cdots H_d\cdot H_\varpi^{-\frac{d(d+1)}{2\varpi}}p^{\frac{d(d+1)}{4\varpi R}}.
\]
Returning to~\eqref{eq:S-intermediate} we have
\begin{align*}
|S|
&\le  (H_1\cdots H_d)^{1-\frac{1}{2r}}H_\varpi^{-\frac{1}{2\varpi r}}p^{\frac{d}{2r}-\frac{1}{4r}-\frac{\varpi(r-1)-1}{4\varpi rR}+o(1)}\\
&\qquad  \quad+(H_1\cdots H_d)^{1-\frac{1}{2r}}(H_1\cdots H_d\cdot H_\varpi^{-\frac{d(d+1)}{2\varpi}}p^{\frac{d(d+1)}{4\varpi R}})^{-\frac{1}{2}}H_\varpi^{-\frac{1}{2\varpi r}}p^{\frac{d}{2r}+\frac{1}{4\varpi rR}+o(1)}\\
&\le  (H_1\cdots H_d)^{1-\frac{1}{2r}}H_\varpi^{-\frac{1}{2\varpi r}}p^{\frac{d}{2r}-\frac{1}{4r}-\frac{\varpi(r-1)-1}{4\varpi rR}+o(1)}\\
&\qquad  \quad+(H_1\cdots H_d)^{\frac{1}{2}-\frac{1}{2r}}H_\varpi^{\frac{d(d+1)}{4\varpi}-\frac{1}{2\varpi r}}p^{\frac{d}{2r}-\frac{rd(d+1)-2}{8\varpi rR}+o(1)}.
\end{align*}

To guarantee the first restriction in~\eqref{eq:UjV2}, it suffices to add
\[
\min_{1\leqslant j\leqslant d}H_j \{H_\varpi p^{-\frac{1}{2R}}\}^{\frac{1}{\varpi}}<p
\]
in view of~\eqref{eq:V},
which is exactly~\eqref{eq:HjHvarpi}. We conclude the proof of Theorem~\ref{thm:disccharactersum-monic} by verification that the chosen index $\varpi$ minimizes $U_j$. In fact, we infer from~\eqref{eq:V} and~\eqref{eq:UjV1} that
\[
U_j=H_j\{H_\varpi p^{-\frac{1}{2R}}\}^{-\frac{j}{\varpi}},\ \ 1\leqslant j\leqslant d.
\]
In view of the above choice of $\varpi$, it suffices to verify that
\[
H_j\{H_\varpi p^{-\frac{1}{2R}}\}^{-\frac{j}{\varpi}}=U_j\geqslant U_\varpi=p^{\frac{1}{2R}}
\]
for all $1\leqslant j\leqslant d$, which is equivalent to
\[
(H_jp^{-\frac{1}{2R}})^{\frac{1}{j}}\geqslant 
(H_\varpi  p^{-\frac{1}{2R}})^{\frac{1}{\varpi}}
\]
as desired in the definition of $\varpi$ in Theorem~\ref{thm:disccharactersum-monic}.
We now complete the proof.

\section{Proof of Theorem~\ref{thm:doublecharactersum}}
\subsection{Initial reduction}
We first introduce
\[
\varphi(X,Y)=X^\kappa Y^\ell(sX^m +tY^n)^j,
\]
so that
\[
S_{\fs,\chi} (\cA,\cB) = \sum_{a \in \cA}\sum_{b \in \cB} \chi(\varphi(a,b)).
\] 
Throughout this section we fix $r\geqslant3$ to be an integer.
It is convenient to assume that 
\begin{equation}
\label{eq:smallAB}
A, B \le p^{\frac{1}{2}+\frac{r-2}{2(r-1)(2r-3)}}
\end{equation} 
as otherwise the Weil bound (see~\cite[Theorem~11.23]{IK04}) augmented with the completing technique (see~\cite[Section~12.2]{IK04}) imply the bound 
\[
S_{\fs,\chi} (\cA, \cB)
\ll \min\{A,B\} p^{\frac{1}{2}}\log p+A+B
\le AB p^{-\frac{r-2}{2(r-1)(2r-3)}}\log p+A+B,
\]
which is stronger than the desired bound in Theorem~\ref{thm:doublecharactersum}. 
Note that the term $A+B$ appears to deal with the possible case that the fibres $\varphi(X_0,Y)$ and $\varphi(X,Y_0)$ are degenerate for some finitely many $X_0,Y_0$, to which the trivial bound applies.

Note that the easy case $\kappa=\ell=0$ and $m=n=1$ gives
\[
S_{\fs,\chi} (\cA,\cB) = \sum_{a \in \cA}\sum_{b \in \cB} \chi^j(sa + tb).
\] 
This reduces the problem to bounding linear incomplete character sums. Note that $\chi^j$ is still a non-trivial character modulo $p$ since $1\leqslant j<h$. Taking $r=3$ in~\eqref{eq:Burgess}, we find
\begin{equation}\label{eq:Sbound-easycase}
|S_{\fs,\chi} (\cA,\cB)|
\leqslant \min\{A^{\frac{2}{3}}B,AB^{\frac{2}{3}}\} p^{\frac{1}{9}+o(1)}\leqslant (A+B)^{-\frac{1}{3}}ABp^{\frac{1}{9}+o(1)}.
\end{equation}
In fact, the bound in Theorem~\ref{thm:doublecharactersum} shows
\[
|S_{\fs,\chi} (\cA, \cB)| \le 
(AB)^{1-\frac{1}{r}}(A+B)^{\frac{1}{2r}}p^{\frac{1}{2r}+\frac{1}{4r(r-1)}+o(1)},
\] 
which is trivial unless
\[
AB(A+B)^{-\frac{1}{2}}>p^{\frac{1}{2}+\frac{1}{4(r-1)}+o(1)},
\] 
in which case one may check~\eqref{eq:Sbound-easycase} is stronger than the desired bound in Theorem~\ref{thm:doublecharactersum}. 
We henceforth assume that
\begin{equation}\label{eq:klmn}
(\kappa,\ell,m,n)\neq (0,0,1,1).
\end{equation}

\subsection{Amplification via trivariate bilinear shifts}
We fix three  parameters $U$, $V$ and $W$ with 
\begin{equation}\label{eq: UWnVWm}
UW^{n_*}=\frac{1}{4} A,\qquad 
VW^{m_*}=\frac{1}{4} B,\qquad 
U, V, W\ge 1.
\end{equation}

Exactly as in  Section~\ref{sec:Ampl}  we apply  shifts by three variables
\[
(a,b)\mapsto (a+uw^{n_*} ,b+vw^{m_*})
\]
with $u\sim U,v\sim V$ and $w\sim W$.
Thus, by a similar argument, we have 
\begin{align*}
S_{\fs,\chi} (\cA, \cB) 
&\ll \frac{(\log p)^2}{UVW} 
\sum_{\substack{a \in \cA_1\\ p\nmid a}} \sum_{b \in \cB_1} \sum_{w \sim W}\Bigl|\sum_{u \sim U} \sum_{v \sim V}  \varepsilon(u,v)\chi(\varphi(a+uw^{n_*},b+vw^{m_*}))\Bigr|+B,
\end{align*}
where $\cA_1,\cB_1$ are certain intervals of consecutive integers of length $O(A)$ and $O(B)$, respectively, and $\varepsilon(u,v)$ is some complex weight with $| \varepsilon(u,v)| = 1$, independent from $a,b,w$.
Since $mn_*  = m_*n$, we see that 
\begin{equation}\label{eq:SS-T}
S_{\fs,\chi} (\cA, \cB) \ll \frac{(\log p)^2}{UVW} 
\sum_{\substack{a \in \cA_1\\ p\nmid a}} \sum_{b \in \cB_1} \sum_{w \sim W}
\Bigl|\sum_{u \sim U} \sum_{v \sim V}\varepsilon(u,v)
  \chi(\varphi(a \overline w^{n_*}+u,b \overline w^{m_*}+v))\Bigr|+B.
\end{equation}

\subsection{Grouping variables}
For $x,y\in\Z$, define
\begin{align*}
\sigma(x,y) 
& =\sharp \{(a,b,w)\in\cA_1\times\cB_1\times[W,2W]:\\
& \qquad \qquad  \qquad  \qquad ~a \overline w^{n_*}\equiv x\bmod p, \ b \overline w^{m_*}\equiv y\bmod p,~p\nmid a\}. 
\end{align*}
Therefore, we infer from~\eqref{eq:SS-T} that
\begin{align*}
S_{\fs,\chi}(\cA, \cB)
&\ll \frac{(\log p)^2}{UVW}
\sum_{x,y\bmod p}\sigma(x,y)
\Bigl| \sum_{u \sim U} \sum_{v \sim V}\varepsilon(u,v)\chi(\varphi(x+u,y+v))\Bigr|+B.
 \end{align*} 

By the H\"older inequality, we get
\begin{equation}\label{eq:Holder}
S_{\fs,\chi} (\cA, \cB)
\ll \frac{(\log p)^2}{UVW} \fS_1^{1-\frac{1}{r}}(\fS_2\fS_3)^{\frac{1}{2r}}+B
\end{equation} 
with
\[
\fS_i=\sum_{x,y\bmod p} \sigma(x,y)^i,\qquad i=1,2,
\]
and
\[
\fS_3=\sum_{x,y\bmod p}\Bigl| \sum_{u \sim U} \sum_{v \sim V}\varepsilon(u,v)\chi(\varphi(x+u,y+v))\Bigr|^{2r}.
\]
 
Trivially we have
\begin{equation}\label{eq:fS1-upperbound}
\fS_1\ll ABW. 
\end{equation} 
Moreover, $\fS_2$ is bounded by the number of tuples $(a_1,a_2,b_1,b_2,w_1,w_2)\in\cA_1^2\times\cB_1^2\times[W,2W]^2$ such that
\[
a_1w_2^{n_*}\equiv a_2w_1^{n_*}\bmod p,\qquad b_1w_2^{m_*}\equiv b_2w_1^{m_*}\bmod p,\qquad p\nmid a_1a_2.
\]
For any given $(a_1,a_2,w_1,w_2)\in\cA_1^2\times[W,2W]^2$, there are at most $O(B)$ choices for the pair $(b_1,b_2)\in\cB_1^2$. With the aid of Lemma~\ref{lm:ThXY} the bound
\begin{equation}
\fS_2 \ll \(\frac{A^2W^2}{p} + AW+ AW^2 p^{-\frac{5}{8}}\) Bp^{o(1)}
\ll ABWp^{o(1)},\label{eq:fS2-upperbound}
\end{equation}
holds since $W\ll p^{\frac{5}{8}}$ in view of~\eqref{eq:smallAB} and we now additionally assume that 
\begin{equation}
\label{eq:smallAW}
AW \le p
 \end{equation} 
 
Now it remains to estimate $\fS_3$, which is the target of the next subsection.

\subsection{Stratifications}
Opening the $2r$-th power and switching  summations, we find
\[
\fS_3
\leqslant \sum_{(\vu,\vv)\in\cB} | \fS_r(\vu, \vv)|,
\]
where $\cB = [U,2U]^{2r} \times [V,2V]^{2r}$, and for $\vu = (u_1, \ldots, u_{2r})$,  $\vv = (v_1, \ldots, v_{2r})$ we have
\[
\fS_r(\vu, \vv)
=\sum_{x,y\bmod p}\prod_{i=1}^r  \chi
\(\frac{\varphi(x+u_i,y+v_i)}{\varphi(x+u_{r+i},y+v_{r+i})} \). 
\]

We now appeal to a stratification result of Xu~\cite[Theorem~1.1]{Xu20}, which 
we present in the very special case relevant to the sums $ \fS_r(\vu, \vv)$. 
Namely, the parameters $(n,r,\vm)$ of Lemma~\ref{lm:Xu} are now replaced in the following way
\[
(n,r,\vm) \leftarrow (2, 2r, (x,y)),
\]
which we apply to rational functions
\[
 F_i(\vm)\leftarrow  \varphi(x,y)=x^\kappa y^\ell(sx^m +ty^n)^j
\]
(which are now all equal),  shifted by $\vx^{(i)} = (u_\nu, v_\nu)$, $\nu=1, \ldots, 2r$.

To apply Lemma~\ref{lm:Xu}, we need to check the following two facts about the function $\varphi(X,Y)=X^\kappa Y^\ell(sX^m +tY^n)^j$ in $X$ and $Y$, under the assumed conditions $0 \le \kappa,\ell < h$, $1 \le j < h$
and 
$p\nmid st$ and also provided $p$ is large enough (so that $p> mn$):
\begin{enumerate}[(i)]
\item $\varphi$ is $h$-th power-free, 
\item If $\varphi(X,Y)=\varphi(X+a,Y+b)$ identically for $X,Y\in\overline{\F}_p$, then $a=b=0$ unless $(\kappa,\ell,m,n)=(0,0,1,1)$, in which case the above shift implies $sa+tb=0$.
\end{enumerate}

To prove (i), for $P(X,Y)=sX^m +tY^n$, we observe that $\gcd(P,X)=\gcd(P,Y)=1$ and $\gcd(P,\partial_XP)=\gcd(P,smX^{m-1})=1$.
Thus $P$ is square-free and $X,Y,P$ are pairwise coprime. The irreducible-factor multiplicities
in $X^\kappa Y^\ell P^j$ are consequently $\kappa,\ell,j$, each of which is smaller than $h$.
Hence $\varphi$ must be $h$-th power-free.

We now turn to (ii). Take $p$ to be large enough such that $p>\deg\varphi$. We examine the translation $(a,b)$ such that
\begin{equation}\label{eq:varphi(X,Y)-shift}
\varphi(X,Y)\equiv \varphi(X+a,Y+b)
\end{equation}
as polynomials in $X,Y$.

We first consider the case $\kappa>0$. We now find $\varphi(0,Y)\equiv \varphi(a,Y+b)$, as polynomials in $Y$. More precisely,
$a^\kappa(Y+b)^\ell(sa^m +t(Y+b)^n)^j=0$ for all $Y\in\overline \F_p$. If $a\neq0$, 
we have $(Y+b)^\ell(sa^m +t(Y+b)^n)^j=0$ for all $Y\in\overline \F_p$, which is obviously impossible, hence this implies $a=0$.
Cancelling the factor $X^\kappa$, we obtain $Y^{\ell}(sX^m+tY^n)^j\equiv (Y+b)^\ell(sX^m +t(Y+b)^n)^j$. Taking $X=0$ this yields $Y^{\ell+jn}\equiv (Y+b)^{\ell+jn}$ which forces $b=0$. Similarly, if $\ell>0$,~\eqref{eq:varphi(X,Y)-shift} also implies $a=b=0$.

If $\kappa=\ell=0$, then $P(X+a,Y+b)^j\equiv P(X,Y)^j$ implies
$P(X+a,Y+b)\equiv cP(X,Y)$ for some $c$ with $c^j=1$. Comparison of the leading
homogeneous parts gives $c=1$, so that $P(X+a,Y+b)\equiv P(X,Y)$. Assume that $(a,b)\neq(0,0)$ in~\eqref{eq:varphi(X,Y)-shift}. Iterating with $(X,Y)\rightarrow(X+a,Y+b)$, we find $P(X,Y)\equiv P(X+ua,Y+ub)$ for all $u\in\F_p$, that is, $sX^m +tY^n\equiv s(X+ua)^m +t(Y+ub)^n$. Taking the derivative with respect to $u$ on both sides, we get $smaX^{m-1}+tnbY^{n-1}\equiv 0$ since $p$ is large enough. This pushes $m=n=1$ and $sa+tb=0$.

Under the assumption~\eqref{eq:klmn} we can apply Lemma~\ref{lm:Xu}  with 
\[
(\vartheta_0, \vartheta_1, \vartheta_2) \leftarrow (0, r-1,  2r), 
\]
and thus  with $ \cX_0,  \cX_1,  \cX_2$ of dimensions satisfying
  \begin{align*} 
&  \dim \cX_0\leq 4r-\vartheta_0 =4r, \\   
&\dim \cX_1\leq 4r -\vartheta_1  =3r+1,\\
&\dim \cX_2 \leq 4r -\vartheta_2 = 2r,
 \end{align*}   
 and such that  one has the inequalities
\[
\fS_r (\vu, \vv)
\ll 
\begin{cases} 
p^2,\  &\text{for }(\vu, \vv) \in \cX_2,\\
p^{\frac{3}{2}},  &\text{for }(\vu, \vv) \in \cX_0 \setminus \cX_2,\\
p,  &\text{for }(\vu, \vv) \in \cX_0 \setminus \cX_1.
\end{cases}
\]
Therefore,
\begin{equation}\label{eq:Sigma Strat}
\fS_3
\ll \sharp(\cX_2 \cap  \cB) \cdot p^2 +  \sharp(\cX_1 \cap  \cB)   \cdot p^{\frac{3}{2}}  + \sharp(\cX_0 \cap  \cB)   \cdot p .
\end{equation}
Invoking Lemma~\ref{lm:Xu-counting}, we see that   the inequality~\eqref{eq:Sigma Strat}  yields  
\begin{equation} \label{eq:fS3-upperbound}
 \begin{split}
\fS_3
&\ll  p^2 \max\{U,V\}^{2r}+  p^{\frac{3}{2}} \max\{U,V\}^{2r} \min\{U,V\}^{r+1} +p (UV)^{2r}\\
&\ll  p^2(U+V)^{2r}+  p^{\frac{3}{2}} (U+V)^{r-1}(UV)^{r+1} +p (UV)^{2r} \\
&\ll  p^2(UV)^{2r}(\min\{U,V\}^{-2r}+  p^{-\frac{1}{2}}\min\{U,V\}^{1-r}+p^{-1}).
\end{split}
 \end{equation}

\subsection{Optimisation} 
Substituting the bounds~\eqref{eq:fS1-upperbound}, \eqref{eq:fS2-upperbound} and~\eqref{eq:fS3-upperbound} in~\eqref{eq:Holder}, we obtain 
\begin{equation} \label{eq:Sbound 1}
 \begin{split}
 &|S_{\fs,\chi} (\cA, \cB)|\\
 & \quad  \le  (AB)^{1- \frac{1}{2r}} W^{-\frac{1}{2r}}(\min\{U,V\}^{-2r}+  p^{-\frac{1}{2}}\min\{U,V\}^{1-r}+p^{-1})^{\frac{1}{2r}} p^{\frac{1}{r}+ o(1)}+B.
\end{split}
 \end{equation}
 
We choose $U$ and $V$ in such a way that 
\[
\min\{U,V\}=\frac{1}{4} p^{\frac{1}{2(r-1)}}
\]
to balance the terms $p^{-\frac{1}{2}}\min\{U,V\}^{1-r}$ and $p^{-1}$ in~\eqref{eq:Sbound 1}. This gives
\[
 |S_{\fs,\chi} (\cA, \cB)|
  \le  (AB)^{1- \frac{1}{2r}} W^{-\frac{1}{2r}} p^{\frac{1}{2r}+o(1)}+B.
\]
We now return to~\eqref{eq: UWnVWm} to determine the choice of $W$.

{\bf Case I:} Suppose $U\geqslant V$, then we have $V=\frac{1}{4} p^{\frac{1}{2(r-1)}}$, and choose
\begin{equation}\label{eq:U,W1}
W=(B p^{-\frac{1}{2(r-1)}})^{\frac{1}{m_*}} \mand
U=\frac{1}{4}A(B p^{-\frac{1}{2(r-1)}})^{-\frac{n}{m}},
\end{equation} 
subject to the constraints
\[
B>p^{\frac{1}{2(r-1)}} \mand (A p^{-\frac{1}{2(r-1)}})^m\geqslant (B p^{-\frac{1}{2(r-1)}})^n.
\]
These guarantee $U,V,W\geqslant1$ and $U\geqslant V$ as desired.

{\bf Case II:} Suppose $U< V$, then we have $U=\frac{1}{4} p^{\frac{1}{2(r-1)}}$, and choose
\begin{equation}\label{eq:U,W2}
W=(A p^{-\frac{1}{2(r-1)}})^{\frac{1}{n_*}} \mand 
V=\frac{1}{4}B(A p^{-\frac{1}{2(r-1)}})^{-\frac{m}{n}},
\end{equation} 
subject to the constraints
\[
A>p^{\frac{1}{2(r-1)}} \mand (A p^{-\frac{1}{2(r-1)}})^m< (B p^{-\frac{1}{2(r-1)}})^n.
\]
These guarantee $U,V,W\geqslant1$ and $U<V$ as desired.

Note that the condition~\eqref{eq:smallAW} holds in view of the above choices and~\eqref{eq:smallAB}.
Therefore, under the condition
\[
A,B>p^{\frac{1}{2(r-1)}},
\]
we find
\[
(A p^{-\frac{1}{2(r-1)}})^m \ge (B p^{-\frac{1}{2(r-1)}})^n \Longleftrightarrow
(A p^{-\frac{1}{2(r-1)}})^{\frac{1}{n_*}}\ge (B p^{-\frac{1}{2(r-1)}})^{\frac{1}{m_*}}.
\]
Combining the above two cases, we derive from~\eqref{eq:Sbound 1}, \eqref{eq:U,W1} and~\eqref{eq:U,W2} that
\begin{align*}
|S_{\fs,\chi} (\cA, \cB)|
&\le (AB)^{1-\frac{1}{2r}} p^{\frac{1}{2r}+o(1)} \min\{(Ap^{-\frac{1}{2(r-1)}})^{\frac{1}{n_*}},(Bp^{-\frac{1}{2(r-1)}})^{\frac{1}{m_*}}\}^{-\frac{1}{2r}}+B\\
&\le (AB)^{1-\frac{1}{2r}} p^{\frac{1}{2r}+o(1)} \{(A^{-1}p^{\frac{1}{2(r-1)}})^{\frac{1}{n_*}}+(B^{-1}p^{\frac{1}{2(r-1)}})^{\frac{1}{m_*}}\}^{\frac{1}{2r}}.
\end{align*} 
This completes the proof of Theorem~\ref{thm:doublecharactersum}.

\section{Deriving Corollary~\ref{coro:disccharactersum-trinomial} from Theorem~\ref{thm:doublecharactersum}}
\label{sec:proofofCorollary-disccharactersum-trinomial}

With the aid of explicit expressions of discriminants of trinomials (see Lemma~\ref{lm:trinomial-discriminant}), we may immediately translate Theorem~\ref{thm:doublecharactersum} into the estimate for $\fS_{\chi} (\cT)$ with $\cT$ given by~\eqref{eq:F2}. 
Indeed, we may write
\[
\fS_{\chi} (\cT)=\chi(-1)^{\frac{d(d-1)}{2}}\sum_{a\in\cA}\sum_{b\in\cB}\chi(b)^{k-1}\chi^e(d^{\frac{d}{e}} b^{\frac{d-k}{e}}- (-1)^{\frac{d}{e}} (d-k)^{\frac{d-k}{e}} k^{\frac{k}{e}} a^{\frac{d}{e}}),
\]
where $e=\gcd(d,k)$. If $h=\ord(\chi)\mid e$, then $\chi^e$ becomes a trivial character modulo $p$, but $h\nmid(k-1)$, which reduces the problem to bounding the incomplete character sum
\[
\sum_{b\in\cB}\chi^{k-1}(b)
\]
with $b$ restricted by $d^{\frac{d}{e}} b^{\frac{d-k}{e}}\not\equiv (-1)^{\frac{d}{e}} (d-k)^{\frac{d-k}{e}} k^{\frac{k}{e}} a^{\frac{d}{e}}\bmod p$. This congruence condition is harmless since, for each fixed $a$, it rules out finitely many possible values of $b$.
Now Burgess's bound~\eqref{eq:Burgess} applies.
If $h\nmid e$, then $\chi^e$ is still a non-trivial character modulo $p$. 
We may apply Theorem~\ref{thm:doublecharactersum} with
\[
(\kappa,\ell,m=m^*,n=n^*,s,t,j)\leftarrow \(0,k_h-1,\frac{d}{e},\frac{d-k}{e},- (-1)^{\frac{d}{e}} (d-k)^{\frac{d-k}{e}} k^{\frac{k}{e}},d^{\frac{d}{e}},e_h\).
\]
where $e_h$ and $k_h$ are the smallest positive residues of $e$ and $k$ modulo 
$h$, respectively.
The restrictions on sizes of $A$ and $B$ can be verified directly.

\section{Acknowledgements}
We would like to thank Olivier Wittenberg for a very useful discussion of the shift non-invariance of the discriminant given in  Lemma~\ref{lm:shiftinvariant}.

This work was completed during the Program ``Fouvry-73'' held in the Bernoulli Centre of EPF Lausanne during the summer of 2026. We thank the organisers of this four-week program for their hospitality and wonderful working environment. I.S. would also like to thank the Max Planck Institute for 
Mathematics, Bonn, for its support and excellent working conditions during the
preparation of this work.

I.S. is supported in part by the Australian Research Council Grant  DP230100534, and P.X. is supported in part by NSFC (No. 12625101), Shaanxi NSF (No. 2025JC-QYCX-002) and Shaanxi Fundamental Science Research Project for Mathematics and Physics (No.25JSZ007).

\end{document}